\newtheorem{theorem}{Theorem}
\newtheorem*{thmIntr}{Theorem}
\newtheorem{lemma}{Lemma}
\newtheorem{corollary}{Corollary}
\theoremstyle{definition}
\newtheorem{definition}{Definition}
\theoremstyle{remark}
\newtheorem{remark}{Remark}
\newtheorem*{remIntr}{Remark}
\newcommand{\sC}{{\mathcal C}}
\newcommand{\sE}{{\mathcal E}}
\newcommand{\sH}{{\mathcal H}}
\newcommand{\sS}{{\mathcal S}}
\newcommand{\sX}{{\mathcal X}}
\newcommand{\sY}{{\mathcal Y}}
\newcommand{\A}{{\mathbb A}}
\newcommand{\G}{{\mathbb G}}
\renewcommand{\P}{{\mathbb P}}
\newcommand{\Q}{{\mathbb Q}}
\newcommand{\mS}{{\mathbb S}}
\newcommand{\Z}{{\mathbb Z}}
\newcommand{\Sm}{{\mathbf{Sm}}}
\newcommand{\Spc}{{\mathbf{Spc}}}
\newcommand{\ev}{\text{ev}}
\newcommand{\Hho}{{\mathbf{Ho}}}
\newcommand{\sHom}{{\mathcal{H}{om}}}
\newcommand{\Hom}{\text{Hom}}
\newcommand{\SH}{\mathcal{SH}}
\newcommand{\id}{\operatorname{id}}
\newcommand{\Ho}{{\mathcal{H}_{\bullet}}}
\newcommand{\colim}{\operatornamewithlimits{\varinjlim}}
\newcommand{\hocolim}{\operatorname{hocolim}}
\newcommand{\Spt}{{\mathbf{Spt}}}
\begin{document}

\title{Witt sheaves and the $\eta$-inverted sphere spectrum}

\author{Alexey Ananyevskiy}
\address{
Chebyshev Laboratory\\ 
St. Petersburg State University\\ 
14th Line, 29b, Saint Petersburg\\ 
199178 Russia
}
\email{alseang@gmail.com}

\author{Marc Levine}
\address{
Universit\"at Duisburg-Essen\\
Fakult\"at Mathematik\\
Thea-Leymann-Stra{\ss}e 9\\
45127 Essen\\
Germany}
\email{marc.levine@uni-due.de}
\author{Ivan Panin}
\address{St. Petersburg Branch of V. A. Steklov Mathematical Institute,
Fontanka 27, 191023 St. Petersburg, Russia}

\email{paniniv@gmail.com}

\address{Institute for Advanced Study, Einstein Drive, Princeton, NJ, 08540, USA}

\email{panin@ias.edu}

\subjclass{14F42, 19G38, 11E81, 55P42}

\keywords{Motivic homotopy theory, Witt groups, Hermitian $K$-theory}

\thanks{The author Levine thanks the SFB Transregio 45 and the Humboldt Foundation (Humboldt Professorship) for support. The authors Ananyevskiy and Panin are supported by Russian Scientific Foundation, grant N 14-21-00035.}
\subjclass{Primary 14C25, 19E15; Secondary 19E08 14F42, 55P42}
 
\maketitle

\begin{abstract}
Ananyevsky  has recently computed the stable operations and cooperations of rational Witt theory \cite{An15}. These computations enable us to show a motivic analog of Serre's finiteness result: 
\begin{thmIntr}  Let $k$ be a field. Then $\pi^{\A^1}_ n(\mS^- _k )_*$ is torsion for $n > 0$. 
\end{thmIntr}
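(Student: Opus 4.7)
The claim that $\pi^{\A^1}_n(\mS^-_k)_*$ is torsion is equivalent to $\pi^{\A^1}_n(\mS^-_k)_* \otimes \Q = 0$, and since rationalization commutes with taking homotopy sheaves, it suffices to show that the homotopy sheaves of $\mS^-_k \otimes \Q$ are concentrated in degree zero. My plan is to identify $\mS^-_k \otimes \Q$ with the motivic Eilenberg--MacLane spectrum associated to the sheaf of rational Witt rings, which I denote by $M_\Q$. The theorem is then immediate from $\pi^{\A^1}_n(M_\Q)_* = 0$ for $n \neq 0$.

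Rationally the motivic sphere splits via the idempotents $(1 \pm \epsilon)/2$, where $\epsilon = -\langle -1 \rangle$, into a plus part on which $\eta$ acts trivially and a minus part on which $\eta$ acts invertibly; in particular $\mS^-_k \otimes \Q$ is $\eta$-local. By Morel's theorem, $\pi^{\A^1}_0(\mS_k)_* \cong K^{MW}_*$, which after inverting $\eta$ and rationalizing becomes the rational Witt sheaf $W_* \otimes \Q$. Projection to the minus summand composed with the unit of $M_\Q$ provides a natural morphism $\mS^-_k \otimes \Q \to M_\Q$ realizing this isomorphism on $\pi^{\A^1}_0$.

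The core task is to upgrade this comparison to a motivic equivalence. For this I would invoke Ananyevsky's computation \cite{An15} of the stable operations and cooperations of rational Witt theory. Using the cooperation Hopf algebroid one sets up a Witt-theoretic Adams spectral sequence abutting to $\pi^{\A^1}_*(\mS^-_k \otimes \Q)$; the computation of \cite{An15} should force the $E_2$-page to be concentrated in a single line, producing both collapse and the desired equivalence $\mS^-_k \otimes \Q \simeq M_\Q$. Equivalently, one can phrase this via the slice filtration: the cooperation input implies that rational Witt theory is rigid, so the only nonzero slice of $\mS^-_k \otimes \Q$ is the zeroth one, with value $M_\Q$.

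The principal obstacle is precisely this rigidity step --- converting Ananyevsky's algebraic cooperation computation into a genuine equivalence $\mS^-_k \otimes \Q \simeq M_\Q$ in $\SH(k)$. Routine points to verify along the way are that $\mS^-_k \otimes \Q$ is already $\eta$-local (so the unit factors through the Witt EM spectrum in a well-defined way), that the comparison map is compatible with the multiplicative structure, and that the spectral sequence is strongly convergent after tensoring with $\Q$. Once the rigidity step is in place the theorem follows, since the higher homotopy sheaves of an Eilenberg--MacLane spectrum vanish.
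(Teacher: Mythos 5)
You have correctly reduced the problem to establishing an equivalence $\mS^-_{k\Q}\simeq EM(W_\Q)$, and you have correctly located Ananyevsky's operation/cooperation computations as the key input. However, the proposal stops short of an actual proof at exactly the point you yourself flag as ``the principal obstacle'': you never actually produce the equivalence. Announcing that one ``would invoke'' a Witt-theoretic Adams spectral sequence whose $E_2$-page ``should'' collapse to a single line is a program, not an argument; you would need to construct the spectral sequence, identify its $E_2$-term via the cooperation Hopf algebroid, prove collapse, and handle convergence, none of which is sketched.

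The slice-filtration alternative you offer is not merely vague but wrong as stated: $\eta$ acts nilpotently on every slice of every spectrum, so after inverting $\eta$ all slices vanish. Thus $\mS^-_k$ has trivial slices but is nonzero --- the slice filtration simply fails to converge here --- and in particular the zeroth slice of $\mS^-_{k\Q}$ is not $EM(W_\Q)$.

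The paper takes a shorter and entirely different route, avoiding any spectral sequence. From Ananyevsky's computations one deduces (Corollary~\ref{cor:no_extensions}) two strong facts about $EM(W_\Q)$: the internal endomorphism spectrum $\sHom(EM(W_\Q),EM(W_\Q))$ is just $EM(W_\Q)$ itself (no higher self-extensions), and $EM(W_\Q)$ is smash-idempotent, $EM(W_\Q)\wedge EM(W_\Q)\cong EM(W_\Q)$. The comparison map $\mS[\eta^{-1}]_\Q\to EM(W_\Q)$ is an isomorphism on $\pi^{\A^1}_{n,n}$ (Lemma~\ref{lm:unit_iso}), so its fiber $A$ lies in $\SH(k)_{\ge 1}$ by Morel's connectivity theorem. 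Smashing the fiber sequence with $EM(W_\Q)$ and using the idempotence shows $A\wedge EM(W_\Q)=0$; smashing it with $A$ then gives $A\wedge A\cong A$. Finally, smash products add connectivities (Lemma~\ref{lem:smash}), so a nonzero $A\in\SH(k)_{\ge 1}$ would have $A\wedge A$ strictly more connected than $A$, a contradiction. Hence $A=0$ and the equivalence holds. This connectivity-plus-idempotence device is the content you are missing; without it, or a fully worked-out spectral sequence substitute, the proof is incomplete.
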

As an application we define a category of Witt motives and show that rationally this category is equivalent to the minus part of $\SH(k)_\Q$.
\end{abstract} 

\tableofcontents

\section*{Introduction}

J.-P. Serre \cite{Serre} obtained the finite generation of the homotopy groups of spheres, and thereby the finiteness of all higher stable homotopy groups of spheres,  by comparing $S^n$ with the corresponding Eilenberg-MacLane space $K(\Z, n)$ and studying the resulting fiberation via the machinery of spectral sequences. In the motivic setting, finite generation results are very hard to come by, but one can still raise a weaker question: which bi-graded  homotopy sheaves of the motivic sphere spectrum $\mathbb{S}_k$ over a field $k$ are torsion? Morel's construction of the  homotopy $t$-structure on $\SH(k)$ suggests that one should organize the search by setting $\pi^{\A^1}_n(\mathbb{S}_k)_*:=\oplus_{q\in\Z}\pi^{\A^1}_{n+q,q}(\mathbb{S}_k)$. Morel's $\A^1$ connectedness theorem tells us that $\pi^{\A^1}_n(\mathbb{S}_k)_*=0$ for $n<0$ and Morel has computed $\pi^{\A^1}_0(\mathbb{S}_k)_*$ as the {\em Milnor-Witt sheaves} $\underline{K}^{MW}_{\ -*}$. 

One might therefore suspect that $\pi^{\A^1}_n(\mathbb{S}_k)_*$ is torsion for $n>0$. This is however not the case. Cisinski-D\'eglise have shown that $\pi^{\A^1}_{a,b}(\mathbb{S}_k)_\Q$ contains the sheaf of rational motivic cohomology groups $\sH^{-a}(\Q(-b))$ as a direct summand, and these latter are nonzero for all $a, b$ with $0>a\ge b$, at least in case $k$ has characteristic zero. The summand corresponding to motivic cohomology has a natural description: the symmetry involution on $\P^1\wedge\P^1$ defines an involution on $\mS_k$ and after inverting 2 decomposes $\mS_k$ into plus and minus ``eigenfactors'', $\mS_k[1/2]=\mS_k^+\oplus\mS_k^-$. The plus factor  $\mS_{k \Q}^+$ represents rational motivic cohomology. One can give an alternative description of the minus part of $\mS_k$ as $\mS_k[1/2,\eta^{-1}]$, where $\eta:\mS_k\to \Sigma^{-1,-1}\mS_k$ is the stable algebraic Hopf map. 

A. Ananyevsky has recently computed the stable operations and cooperations of rational Witt theory  \cite{An15}. These computations enable us to show a motivic analog of Serre's finiteness theorem:

\begin{thmIntr} Let $k$ be a field. Then $\pi^{\A^1}_n(\mathbb{S}_k^-)_*$ is torsion for $n>0$. 
\end{thmIntr}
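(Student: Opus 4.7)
The plan is to show that rationally, $\mS_{k,\Q}^-$ agrees with the Eilenberg--MacLane spectrum associated to the rational Witt sheaf in Morel's homotopy $t$-structure; as such a spectrum has homotopy sheaves concentrated in degree $0$, this yields the desired vanishing above degree $0$, hence the torsion statement integrally.

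Step one is a reduction. Since $\mS_k^- = \mS_k[1/2,\eta^{-1}]$ already has $2$ inverted, ``torsion'' is equivalent to ``vanishing after rationalization'', so it suffices to prove $\pi^{\A^1}_n(\mS_{k,\Q}^-)_* = 0$ for $n>0$. Morel's theorem identifies $\pi^{\A^1}_0(\mS_k)_*$ with the Milnor--Witt $K$-theory sheaves $\underline{K}^{MW}_{-*}$. After inverting $2$ this splits into a plus-part (detecting Milnor $K$-theory) and a minus-part (the Witt sheaf $\underline{W}_{-*}$), and rationally the minus-part is $\underline{W}_{-*}\otimes\Q$. One therefore has a canonical truncation map $\tau: \mS_{k,\Q}^- \to H\underline{W}_\Q$ to the Eilenberg--MacLane spectrum of the rational Witt sheaf (regarded as an object of $\SH(k)$ via Morel's homotopy $t$-structure), and the theorem is equivalent to $\tau$ being an equivalence.

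Step two is the heart of the argument, where Ananyevskiy's calculation enters. His computation of the stable operations and cooperations of rational Witt theory \cite{An15} shows that the bigraded algebra of cooperations is free over the rational Witt ring of the base; equivalently, the rational Witt--theoretic Steenrod algebra is trivial. This is the Witt--theoretic analogue of the classical fact that makes $\mS_\Q$ rationally equivalent to $H\Q$ topologically. I would then assemble a rational motivic Adams spectral sequence based on the rational Witt spectrum, converging to $\pi^{\A^1}_{*,*}(\mS_{k,\Q}^-)$. The freeness of cooperations forces the $E_2$-page to be concentrated on a single line (the Witt coefficient line), so the spectral sequence collapses and $\tau$ is an isomorphism on all bigraded homotopy sheaves. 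Once the equivalence is established, the theorem is immediate, since by construction $\pi^{\A^1}_n(H\underline{W}_\Q)_* = 0$ for $n \neq 0$.

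The step I expect to be the main obstacle is the construction, convergence, and $E_2$-identification of this rational motivic Adams spectral sequence in the $\eta$-inverted setting. Motivic Adams convergence is in general delicate, with genuinely bigraded homotopy sheaves and nilpotence subtleties; but the $\eta$-inverted, rational context is precisely tailored to simplify these issues, and Ananyevskiy's tight control over the Hopf algebroid of cooperations should supply exactly the input needed to check that the spectral sequence converges strongly and that its $E_2$-page is concentrated where claimed.
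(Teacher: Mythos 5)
Your overall strategy matches the paper's: reduce the torsion statement to the claim that the truncation map $\mathbb{S}[\eta^{-1}]_\Q \to EM(W_\Q)$ is an equivalence (this is exactly the paper's Theorem~\ref{thm:Main}), and then observe that an Eilenberg--MacLane object in the heart of the homotopy $t$-structure has vanishing homotopy sheaves away from degree $0$. Your identification of the role of Ananyevskiy's cooperations computation is also correct in spirit --- the paper uses it too, via Corollary~\ref{cor:no_extensions}, to show that $u\wedge\id\colon EM(W_\Q)\to EM(W_\Q)\wedge EM(W_\Q)$ is an isomorphism (the ``trivial Hopf algebroid'' statement you want as your $E_2$-input).

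The genuine gap is in the mechanism you propose to deduce the equivalence from this: you set up a motivic Adams spectral sequence based on $EM(W_\Q)$ and need it to converge strongly to $\pi^{\A^1}_{*,*}(\mathbb{S}^-_{k,\Q})$. You flag this yourself as ``the main obstacle,'' and it really is one --- even with a trivial Hopf algebroid one only gets convergence to the nilpotent completion, and identifying that completion with $\mathbb{S}^-_{k,\Q}$ would in effect presuppose the theorem. The paper avoids the spectral sequence entirely. Instead it uses Morel's connectivity theorem to write a triangle $A\to \mathbb{S}[\eta^{-1}]_\Q\to EM(W_\Q)\to A[1]$ with $A\in\SH(k)_{\ge 1}$, smashes with $EM(W_\Q)$ to get $A\wedge EM(W_\Q)=0$ (using Corollary~\ref{cor:no_extensions}), smashes with $A$ to get $A\wedge A\cong A$, and then invokes a short connectivity lemma (Lemma~\ref{lem:smash} and Corollary~\ref{cor:smash_tstructure}): an object $A\in\SH(k)_{\ge 1}$ with $A\wedge A\cong A$ must be zero, since $A\wedge A$ would then lie in $\SH(k)_{\ge 2n}$ for the maximal $n\ge 1$ with $A\in\SH(k)_{\ge n}$, a contradiction unless $A\cong 0$. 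This gives the equivalence directly from Ananyevskiy's operations/cooperations results plus Morel's connectivity theorem, with no convergence hypothesis to verify. If you want to make your proposal rigorous, the cleanest path is to replace the Adams spectral sequence with exactly this connectivity argument, which packages the same ``nilpotent completion'' intuition in a form that needs no additional input.
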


\begin{remIntr} The result of Cisinski-D\'eglise mentioned above together with the vanishing of motivic cohomology in negative weights and our theorem shows that $\pi^{\A^1}_n(\mathbb{S}_k)_q:=\pi^{\A^1}_{n+q,q}(\mathbb{S}_k)$ is also torsion for $n>0$ as long as $q\ge 0$. 
\end{remIntr}

Our main theorem fills in a missing piece in the recent work of Heller-Ormsby \cite{HO}. They consider a finite Galois extension of fields $L/k$ with Galois group $G$ and define a functor from the $G$-equivariant stable homotopy category to $\SH(k)$. In case $k$ is real closed and $L=k[i]$, and  assuming the motivic analog of Serre's finiteness theorem above, they show that this functor is a fully faithful embedding; without this finiteness result, they are only able to show this after completing at a prime. 

Explicit computations of $\pi_1(\mS_k)_*(k)$ for fields of cohomological dimension $\le 3$ have been made by Ormsby-{\O}stv{\ae}r in \cite{OO}; these refine our general statement in that the torsion has bounded exponent, although the groups are still not in general finite. As our main advance here is for a field for which the Witt group is not torsion, their computations do not perhaps give enough information to support any conjecture as to the finer structure of $\pi_{a,b}(\mS_k)$, for instance, if for fixed $a>b\ge0$ this sheaf has bounded exponent, or even if for a fixed field, the group  $\pi_{a,b}(\mS_k)(F)$ has bounded exponent. Any information in this direction would certainly be of interest.\footnote{There appears to be a preliminary work of R\"ondigs-Spitzweck-{\O}stv{\ae}r that computes the stable $(1+*,*)$-stem of the motivic sphere spectrum over a field of characteristic zero.}

As another application, we define in \S\ref{sec:WittMot} a category of {\em Witt motives} and show that rationally this category is equivalent to the minus part of $\SH(k)_\Q$. 

\section{Preliminaries}

\begin{definition}
The \textit{Hopf map} is the canonical morphism of the varieties
\[
H\colon \A^2-\{0\} \to \mathbb{P}^1
\]
defined via $H(x,y)=[x,y]$. Pointing $\A^2-\{0\}$ by $(1,1)$ and $\mathbb{P}^1$ by $[1:1]$ and taking the suspension spectra we obtain the corresponding morphism
\[
\Sigma^\infty_{T} H \in Hom_{\SH (k)}(\Sigma^\infty_{T} (\A^2-\{0\},(1,1)),\Sigma^\infty_{T} (\mathbb{P}^1,[1:1])).
\]
The \textit{Hopf element} $\eta=\Sigma^{-3,-2} \Sigma^\infty_{T} H\in \mathbb{S}^{-1,-1}(pt)$ is the element corresponding to $\Sigma^\infty_{T} H$ under the suspension isomorphism and canonical isomorphisms 
\[
(\mathbb{P}^1,[1:1])\cong S^{2,1}, (\A^2-\{0\},(1,1))\cong S^{3,2}
\]
given by \cite[Lemma 3.2.15, Corollary 3.2.18 Example 3.2.20]{MV99}.
\end{definition}

\begin{definition}
Put
\[
\mathbb{S}[\eta^{-1}]=\hocolim \left(\mathbb{S} \xrightarrow{ \cup \eta} \Sigma^{-1,-1}\mathbb{S} \xrightarrow{\cup \eta} \Sigma^{-2,-2}\mathbb{S} \xrightarrow{ \cup \eta} \dots \right).
\]
\end{definition}

\begin{definition}
Let $KO$ be the $T$-spectrum representing higher Grothendieck-Witt groups (hermitian $K$-theory) constructed in \cite{PW10}. Let  
\[
KW=KO\wedge \mathbb{S}[\eta^{-1}].
\]
This spectrum inherits the structure of a $(8,4)$-periodic symplectically oriented commutative ring $T$-spectrum from $KO$. The periodicity isomorphisms are given by cup product with Bott element $\beta \in KW^{-8,-4}(pt)$.
\end{definition}

It is well known that spectrum $KW$ represents derived Witt groups as defined by Balmer \cite{Bal99} (see, for example, \cite[Theorem~5]{An12}).

\begin{theorem} \label{thm:KW_represents}
For every smooth variety $X$ and $i,j\in \Z$ there exist canonical isomorphisms $KW^{i,j}(X)\xrightarrow{\simeq} W^{i-j}(X)$. 
\end{theorem}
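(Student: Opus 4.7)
The plan is to reduce the statement to the main computation in \cite[Theorem 5]{An12} by unfolding the homotopy colimit defining $KW$. Since the suspension spectrum $\Sigma^\infty_T X_+$ of a smooth variety $X$ is a compact object of $\SH(k)$, mapping out of it commutes with filtered homotopy colimits. Starting from
\[
\mathbb{S}[\eta^{-1}] \;=\; \hocolim\bigl(\mathbb{S} \xrightarrow{\cup\eta} \Sigma^{-1,-1}\mathbb{S} \xrightarrow{\cup\eta} \Sigma^{-2,-2}\mathbb{S} \xrightarrow{\cup\eta} \cdots\bigr)
\]
and smashing with $KO$, I obtain a natural isomorphism
\[
KW^{i,j}(X) \;\cong\; \colim_n KO^{i-n,\,j-n}(X),
\]
the transition maps being cup product with the Hopf element $\eta$. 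The difference $(i-n)-(j-n)=i-j$ is constant along this diagram, so that the final answer can only depend on $i-j$, matching the shape of Witt theory.

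Next, I would invoke \cite[Theorem 5]{An12}, which provides canonical isomorphisms identifying the $\eta$-inverted hermitian $K$-cohomology of a smooth variety with the Balmer derived Witt groups. Concretely, that result gives $\colim_n KO^{p-n,\,q-n}(X) \cong W^{p-q}(X)$ functorially in $X$; specialising to $(p,q)=(i,j)$ yields the desired $KW^{i,j}(X) \cong W^{i-j}(X)$. Canonicity is automatic once one observes that every ingredient used — the Panin-Walter representability of hermitian $K$-theory by $KO$, the motivic Hopf element $\eta$, and Balmer's triangulated construction of $W^\ast$ — is itself canonical.

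The genuinely non-formal step, done in \cite{An12}, is the comparison of cup product with the motivic Hopf element $\eta$ on $KO$-cohomology with the algebraically-defined maps between Grothendieck-Witt and Witt groups appearing in Karoubi's fundamental fibration. Once that comparison is granted, the identification of the above colimit with Balmer's four-periodic derived Witt groups is formal, relying on the eventual annihilation of the algebraic $K$-theory summand under repeated multiplication by $\eta$ (together with the fact that $\eta$ acts invertibly on $W^\ast$). This $\eta$-versus-fundamental-fibration comparison is the main obstacle one would face in a self-contained reproof.
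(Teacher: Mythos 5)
Your proposal is correct and follows essentially the same route as the paper, which offers no proof at all of Theorem~\ref{thm:KW_represents} but simply cites \cite[Theorem~5]{An12}. Your added unwinding of the $\hocolim$ defining $KW$ via compactness of $\Sigma^\infty_T X_+$, and your identification of the nonformal core as the comparison of $\cup\,\eta$ with Karoubi's fundamental fibration, are a reasonable and accurate elaboration of exactly what that citation packages up.
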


\begin{definition}\label{def:B}
Denote by
\[
\mathcal{B}=\left( \Sigma^{8m,4m}\beta^m \right)_{m\in \Z}  \colon \bigoplus_{m\in \Z} \Sigma^{4m}_T \mathbb{S} \to KW_\Q
\]
the morphism induced by the morphisms $\Sigma^{4m}_T \beta^m\colon \Sigma^{4m}_T \mathbb{S} \to KW_\Q$.
\end{definition}

\begin{theorem}[Theorem 13, \cite{An15}] \label{thm:rational_stable_operations_KW}
The pullback homomorphism
\[
\mathcal{B}^{KW_\Q}\colon KW_\Q^{*,*}(KW_\Q) \to KW_\Q^{*,*}(\bigoplus_{m\in \Z} \Sigma^{4m}_T \mathbb{S})
\]
is an isomorphism. In other words, the homomorphism
\[
Ev\colon KW_\Q^{*,*}(KW_\Q) \to (\prod_{m\in \Z} KW_\Q^{*,*}(pt))_h
\]
given by 
\[
Ev(\phi)=\left(\dots,\beta^2\phi(\beta^{-2}),\beta\phi(\beta^{-1}),\phi(1),\beta^{-1}\phi(\beta^{1}),\beta^{-2}\phi(\beta^{2}),\dots \right)
\]
is an isomorphism. Here the subscript $h$ denotes the subring generated by the homogeneous elements.
\end{theorem}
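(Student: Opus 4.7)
The plan is to establish a rational direct-sum decomposition of $KW_\Q$ in $\SH(k)^-_\Q$ (the $\eta$-inverted part of the rational motivic stable category), after which the theorem becomes a formal calculation. By Theorem~\ref{thm:KW_represents} combined with Morel's homotopy $t$-structure, the homotopy modules of $KW_\Q$ are copies of the rational Witt sheaf $\underline{W}_\Q$, distributed periodically according to the Bott element $\beta\in KW^{-8,-4}(pt)$. Since $KW_\Q$ is $\eta$-periodic it lives in $\SH(k)^-_\Q$, which by Morel is essentially the derived category of rational Witt sheaves, so its rational Postnikov tower splits. Using this, the Bott powers $\Sigma^{8m,4m}_T\beta^m$ assemble into a rational equivalence
\[
\Phi\colon \bigoplus_{m\in\Z}\Sigma^{8m,4m}_T H\underline{W}_\Q \;\xrightarrow{\;\simeq\;}\; KW_\Q,
\]
where $H\underline{W}_\Q$ is the rational Eilenberg--MacLane spectrum for $\underline{W}_\Q$; one checks $\Phi$ is an equivalence on homotopy modules.

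Granted $\Phi$, the rest is bookkeeping. In $\SH(k)^-_\Q$, $\eta$ is invertible, so bidegrees only matter up to $(1,1)$-shifts, and $H\underline{W}_\Q^{p,q}(pt)=W(k)_\Q$ iff $p\equiv q$ (vanishing otherwise). For any motivic spectrum $Y$, $\Phi$ yields
\[
KW_\Q^{a,b}(Y)\;\cong\;\prod_{m\in\Z}H\underline{W}_\Q^{a-8m,\,b-4m}(Y).
\]
Taking $Y=KW_\Q$ and expanding the inner copy via $\Phi$, the double product/sum collapses, in each fixed bidegree $(a,b)$ with $a\equiv b\pmod 4$, to exactly one nonzero term per outer index, producing $\prod_{m\in\Z} W(k)_\Q$. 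This is precisely the homogeneous summand in bidegree $(a,b)$ of $\prod_m KW_\Q^{*,*}(pt)$, and naturality of $\Phi$ with respect to the Bott element identifies the resulting isomorphism with $Ev$.

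The main obstacle is establishing $\Phi$, i.e.\ the rational collapse of the Postnikov tower of $KW_\Q$. The obstruction classes live in groups $\Hom_{\SH(k)^-_\Q}(\Sigma^{p,q}_T H\underline{W}_\Q,\,\Sigma^{p',q'}_T H\underline{W}_\Q[1])$ that one must show vanish outside the ``diagonal''. Via Morel's description of $\SH(k)^-_\Q$ in terms of rational Witt sheaves, this reduces to a sheaf-cohomological vanishing for $\underline{W}_\Q$, addressable through Morel's Gersten-type resolution. Once in hand, the remainder of the proof is automatic.
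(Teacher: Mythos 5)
This statement is quoted as Theorem~13 of \cite{An15} and used here as an external input; the paper contains no proof of it. So your argument must stand on its own, and unfortunately it is circular with respect to the structure of the present paper.

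Your two load-bearing ingredients are (a) the splitting $\Phi\colon\bigoplus_m\Sigma^{8m,4m}_T H\underline{W}_\Q\xrightarrow{\sim}KW_\Q$, and (b) the assertion that $\SH(k)^-_\Q$ is ``essentially the derived category of rational Witt sheaves,'' from which you deduce that the rational Postnikov tower of $KW_\Q$ collapses. Both are downstream consequences of the theorem you are trying to prove. In this paper, the splitting (a) is Corollary~\ref{cor:WittDecomp}, which follows from Theorem~\ref{thm:decompose_KW} and Lemma~\ref{lm:iso_eigenspaces}; the idempotents $\rho^{st}_m$ used to produce the summands $KW_\Q^{(m)}$ owe their existence precisely to Theorem~\ref{thm:rational_stable_operations_KW}. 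The identification (b) of $\SH(k)^-_\Q$ with Witt motives is Theorem~\ref{thm:Equiv}, a consequence of Theorem~\ref{thm:Main}, which in turn rests on Corollary~\ref{cor:no_extensions}, which is proved from Theorems~\ref{thm:rational_stable_operations_KW} and~\ref{thm:cooperations}. The analog of the topological fact ``rational spectra are just graded $\Q$-vector spaces'' is exactly what this paper is laboring to establish for $\SH(k)^-_\Q$; it cannot be assumed.

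The circularity shows up concretely at the step you flag as ``the main obstacle.'' Your obstruction groups $\Hom_{\SH(k)_\Q}(\Sigma^{p,q}_T H\underline{W}_\Q,\Sigma^{p',q'}_T H\underline{W}_\Q[1])$ are the entries of $\mathcal{H}om_{\SH(k)}(EM(W_\Q),EM(W_\Q))$, and the statement that these vanish off the diagonal is Corollary~\ref{cor:no_extensions}(1), proved from the theorem in question. Morel's Gersten resolution computes $H^*_{\mathrm{Nis}}(X,\underline{W})$ for smooth schemes $X$; it does not by itself compute maps between Eilenberg--MacLane spectra, since $H\underline{W}_\Q$ is not a scheme (nor even a compact object), and indeed your ``bookkeeping'' step of expanding $KW_\Q^{*,*}(KW_\Q)$ via $\Phi$ reduces to knowing $H\underline{W}_\Q^{*,*}(H\underline{W}_\Q)$ --- again the content of Corollary~\ref{cor:no_extensions}. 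An honest proof of Theorem~\ref{thm:rational_stable_operations_KW} has to get at the stable operations of $KW_\Q$ by other means (in \cite{An15}, via representability of derived Witt groups and explicit geometric computations), without presupposing the splitting of $KW_\Q$ or the structure of $\SH(k)^-_\Q$.
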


\begin{theorem}[Theorem 14, \cite{An15}] \label{thm:cooperations}
The pushforward homomorphism (in homology)
\[
\mathcal{B}_{KW_\Q}\colon (KW_\Q)_{*,*}(\bigoplus_{m\in \Z} \Sigma^{4m}_T \mathbb{S})\to (KW_\Q)_{*,*}(KW_\Q)
\]
is an isomorphism.
\end{theorem}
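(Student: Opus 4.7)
The plan is to deduce this cooperations computation from the operations computation of Theorem~\ref{thm:rational_stable_operations_KW} via a Kronecker-pairing duality argument. First, identify the source of $\mathcal{B}_{KW_\Q}$ explicitly: since $KW_\Q\wedge-$ commutes with direct sums, one has $(KW_\Q)_{*,*}\bigl(\bigoplus_{m\in\Z}\Sigma^{4m}_T\mathbb{S}\bigr)=\bigoplus_{m\in\Z}(KW_\Q)_{*-8m,*-4m}(pt)$. Writing $e_m$ for the canonical generator of the $m$-th summand, the pushforward $\mathcal{B}_{KW_\Q}(e_m)\in(KW_\Q)_{8m,4m}(KW_\Q)$ is the class $[1\otimes\beta^m]$ represented by the composite $\mathbb{S}^{8m,4m}\xrightarrow{1\wedge\beta^m}KW_\Q\wedge KW_\Q$.

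Next, exploit the Kronecker pairing
\[
\langle-,-\rangle\colon(KW_\Q)_{*,*}(KW_\Q)\otimes_{(KW_\Q)_{*,*}(pt)}KW_\Q^{*,*}(KW_\Q)\to KW_\Q^{*,*}(pt).
\]
A direct unwinding gives $\langle[1\otimes\beta^m],\phi\rangle=\phi(\beta^m)$ for every operation $\phi$, so the classes $\mathcal{B}_{KW_\Q}(e_m)$ realize, up to Bott renormalization, the components of $Ev$ from Theorem~\ref{thm:rational_stable_operations_KW}. Since $Ev$ is an isomorphism onto the homogeneous subring of $\prod_m KW_\Q^{*,*}(pt)$, the module-theoretic dual of this subring is the direct sum $\bigoplus_m KW_\Q^{*,*}(pt)$, which matches the source of $\mathcal{B}_{KW_\Q}$ on the nose; duality then forces $\mathcal{B}_{KW_\Q}$ to be an isomorphism.

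An equivalent and perhaps conceptually cleaner route is to upgrade the problem to an equivalence of $KW_\Q$-module spectra $\bigoplus_{m\in\Z}\Sigma^{8m,4m}KW_\Q\xrightarrow{\simeq}KW_\Q\wedge KW_\Q$ induced by smashing $\mathcal{B}$ with $KW_\Q$, from which $\mathcal{B}_{KW_\Q}$ is recovered by applying $(KW_\Q)_{*,*}(-)$. Applying the internal function spectrum $F_{KW_\Q}(-,KW_\Q)$ to such a putative splitting precisely reproduces the operations statement of Theorem~\ref{thm:rational_stable_operations_KW}, including the homogeneity restriction.

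\emph{Main obstacle:} The technical heart is passing from the cohomological isomorphism supplied by Theorem~\ref{thm:rational_stable_operations_KW} to the module-level equivalence; equivalently, one must show that the cofiber of $KW_\Q\wedge\mathcal{B}$ is contractible. Vanishing of its $KW_\Q$-cohomology alone is not automatically enough, so a detection or generation argument in the category of $KW_\Q$-module spectra is required. Rationality of $KW_\Q$ and its position in the minus part of $\SH(k)_\Q$, where the structural results of Cisinski--D\'eglise apply, should be sufficient to close the gap.
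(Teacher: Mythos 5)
The paper does not prove this statement; it is imported verbatim as Theorem~14 of \cite{An15}, where Ananyevsky establishes it directly, alongside Theorem~13 on operations, by a computation exploiting the symplectic orientation of $KW$. There is therefore no internal proof to compare against, and your proposal is an attempted independent derivation.

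Your initial bookkeeping is fine: the source does identify with $\bigoplus_{m}(KW_\Q)_{*-8m,*-4m}(pt)$, and the Kronecker pairing does satisfy $\langle[1\otimes\beta^m],\phi\rangle=\phi(\beta^m)$. But the obstacle you flag at the end is not a loose end --- it is the entire content of the theorem, and the proposed fix does not close it. In the category of $KW_\Q$-modules, triviality of an object $C$ is detected by $\pi_{*,*}C$ (equivalently $(KW_\Q)_{*,*}C$, i.e. maps \emph{in} from shifts of $KW_\Q$), not by $KW_\Q^{*,*}(C)$ (maps \emph{out} to $KW_\Q$). So vanishing $KW_\Q$-cohomology of $\operatorname{cofib}(KW_\Q\wedge\mathcal{B})$ does not force the cofiber to be contractible, which is precisely what the module-level splitting requires. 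Likewise, the step ``the module-theoretic dual of the homogeneous part of $\prod_m KW_\Q^{*,*}(pt)$ is $\bigoplus_m KW_\Q^{*,*}(pt)$'' is an assertion of perfectness of the Kronecker pairing; this is not automatic since $W(k)_\Q$ is in general an infinite-dimensional $\Q$-vector space rather than a field, and nondegeneracy on one side does not give perfectness. Finally, you cannot repair this by appealing to the splitting $KW_\Q\cong\bigoplus_m\Sigma^{4m}_T EM(W_\Q)$ and computing $(KW_\Q)_{*,*}(KW_\Q)$ by hand: although Corollary~\ref{cor:WittDecomp} only needs the operations theorem, evaluating $EM(W_\Q)\wedge EM(W_\Q)$ is exactly part (2) of Corollary~\ref{cor:no_extensions}, whose proof in the paper invokes the cooperations theorem you are trying to prove, so the route is circular. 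A genuinely independent proof must, like \cite{An15}, attack the homology directly rather than dualize Theorem~\ref{thm:rational_stable_operations_KW}.
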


\section{Rational decomposition of $KW$}

\begin{definition}
For $m\in\Z$, let $\rho^{st}_m\in KW_\Q^{0,0}(KW_\Q)$ be the operation satisfying
\[
\rho^{st}_{m}(\beta^{n})=
\left[
\begin{array}{ll}
\beta^{n}, & n=m,\\
0, & n\neq m.
\end{array}
\right.
\]
These operations are idempotent and Theorem~\ref{thm:rational_stable_operations_KW} yields a decomposition
\[
KW_\Q=\bigoplus_{m\in \Z} KW_\Q^{(m)} 
\]
with $KW_\Q^{(m)}$ being the kernel for  $1-\rho^{st}_{m}$. 
\end{definition}

\begin{lemma} \label{lm:iso_eigenspaces}
$KW^{(m)}_\Q\cong \Sigma^{4m}_T KW^{(0)}_\Q$.
\end{lemma}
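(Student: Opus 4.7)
The strategy is to promote the Bott periodicity equivalence $\cdot\beta\colon \Sigma^4_T KW_\Q \xrightarrow{\simeq} KW_\Q$ to an equivalence between the summands $\Sigma^4_T KW_\Q^{(m-1)}$ and $KW_\Q^{(m)}$, by showing that $\cdot\beta$ intertwines the idempotents $\Sigma^4_T \rho^{st}_{m-1}$ on the source and $\rho^{st}_m$ on the target. Once this is established, iterating $|m|$ times, and using $\beta^{-1}$ when $m<0$, yields the desired isomorphism $\Sigma^{4m}_T KW_\Q^{(0)} \cong KW_\Q^{(m)}$.

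The crucial step is thus the identity
\[
\rho^{st}_m \circ (\cdot\beta) = (\cdot\beta) \circ \Sigma^4_T \rho^{st}_{m-1}
\]
in $\Hom_{\SH(k)}(\Sigma^4_T KW_\Q, KW_\Q)$, equivalently $\rho^{st}_m = \beta \cdot \rho^{st}_{m-1} \cdot \beta^{-1}$ in the bigraded ring of rational stable operations on $KW_\Q$. To verify it I invoke Theorem~\ref{thm:rational_stable_operations_KW}: via $\mathcal{B}^{KW_\Q}$, any such morphism is determined by its restrictions to the summands $\Sigma^{4n}_T \mathbb{S}$, i.e., by its values on the classes $\beta^n$. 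A direct calculation on these generators gives
\[
(\beta \cdot \rho^{st}_{m-1} \cdot \beta^{-1})(\beta^n) = \beta \cdot \rho^{st}_{m-1}(\beta^{n-1}),
\]
which equals $\beta^n$ when $n=m$ and vanishes otherwise, matching the defining formula for $\rho^{st}_m$.

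The remainder is formal: in the rational stable motivic category, which is idempotent complete, if $\phi\colon A \xrightarrow{\simeq} B$ is an isomorphism and $p\colon A\to A$, $q\colon B\to B$ are idempotents with $\phi \circ p = q \circ \phi$, then $\phi$ restricts to an isomorphism of the split summands $\mathrm{Im}(p) \xrightarrow{\simeq} \mathrm{Im}(q)$. Applying this to $\phi = \cdot\beta$, $p = \Sigma^4_T \rho^{st}_{m-1}$, and $q = \rho^{st}_m$ gives $\Sigma^4_T KW_\Q^{(m-1)} \cong KW_\Q^{(m)}$, and iteration finishes the proof. The only step with genuine content is the intertwining identity, and this reduces via Theorem~\ref{thm:rational_stable_operations_KW} to the one-line computation above; everything else is purely formal manipulation of split idempotents.
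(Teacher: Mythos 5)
Your proof is correct and takes essentially the same approach as the paper, which simply asserts that cup product with $\beta$ identifies $KW_\Q^{(m)}$ with $\Sigma^{-4}_T KW_\Q^{(m+1)}$; you have filled in the verification of this claim by checking, via Theorem~\ref{thm:rational_stable_operations_KW}, that $\cdot\beta$ intertwines the idempotents $\rho^{st}_{m-1}$ and $\rho^{st}_m$.
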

\begin{proof}
This follows from the fact that periodicity isomorphism 
\[
KW_\Q \xrightarrow{\cup \beta} \Sigma^{-4}_T KW_\Q
\]
identifies $KW_\Q^{(m)}$ with   $\Sigma^{-4}_T KW_\Q^{(m+1)}$.
\end{proof}

\begin{definition}
Let $A$ be a $T$-spectrum. The \textit{sheaf of stable $\A^1$-homotopy groups of $A$ in bi-degree $(i,j)$} is the Nisnevich sheaf $\pi^{\A^1}_{i,j}(A)$ associated to the presheaf
\[
U\mapsto A^{-i,-j}(U).
\]
We let $\pi^{\A^1}_{*,*}(A)$ denote the bi-graded sheaf $\bigoplus\limits_{i,j\in \Z} \pi^{\A^1}_{i,j}A$. Similarly, set $\pi_n^{\A^1}(A)_q:=\pi^{\A^1}_{n+q,q}(A)$ and $\pi_n^{\A^1}(A)_*:=\oplus_{q\in\Z}\pi^{\A^1}_{n+q,q}(A)$.
\end{definition}

\begin{remark}
Theorem~\ref{thm:KW_represents} yields 
\[
\pi^{\A^1}_{*,*} KW=\underline{W}[\eta,\eta^{-1},\beta,\beta^{-1}]
\]
with $\underline{W}$ being the Nisnevich sheaf associated to the presheaf of Witt groups.
\end{remark}

\begin{lemma}\label{lm:unit_iso}
The unit morphism $u_{KW}\colon \mathbb{S}\to KW$ induces an isomorphism
\[
(u_{KW})_*\colon \bigoplus_{n\in \Z} \pi^{\A^1}_{n,n} \mathbb{S}[\eta^{-1}] \xrightarrow{\simeq} \bigoplus_{n\in \Z} \pi^{\A^1}_{n,n} KW = \underline{W}[\eta,\eta^{-1}].
\]
\end{lemma}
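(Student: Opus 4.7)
The plan is to compute the source and target of $(u_{KW})_*$ separately using Morel's theorem and the remark following Theorem~\ref{thm:KW_represents}, and then compare them. First, since $\mathbb{S}[\eta^{-1}]$ is a sequential homotopy colimit and the homotopy sheaves commute with such colimits, I would identify
\[
\pi^{\A^1}_{n,n}\mathbb{S}[\eta^{-1}] \;\cong\; \colim_{k} \pi^{\A^1}_{n+k,n+k}\mathbb{S},
\]
with transition maps given by multiplication by $\eta$. Morel's computation $\pi^{\A^1}_{i,i}\mathbb{S}=\underline{K}^{MW}_{-i}$, together with the standard identification $\underline{K}^{MW}_*[\eta^{-1}]\cong \underline{W}[\eta,\eta^{-1}]$ (coming from the Milnor--Witt relation $\eta h=0$, which kills the hyperbolic form after $\eta$-inversion), then assembles the colimits into $\bigoplus_{n}\pi^{\A^1}_{n,n}\mathbb{S}[\eta^{-1}]\cong \underline{W}[\eta,\eta^{-1}]$.

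Next, for the target I would invoke the remark following Theorem~\ref{thm:KW_represents} giving $\pi^{\A^1}_{*,*}KW=\underline{W}[\eta,\eta^{-1},\beta,\beta^{-1}]$. A monomial $\eta^a\beta^b$ contributes to $\pi^{\A^1}_{a+8b,\,a+4b}KW$, so landing on the diagonal forces $b=0$; hence $\bigoplus_{n}\pi^{\A^1}_{n,n}KW=\underline{W}[\eta,\eta^{-1}]$ as well. Thus source and target are abstractly the same bigraded Nisnevich sheaf, and what remains is to check that $(u_{KW})_*$ realizes this identification.

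For the comparison step, I would exploit that $(u_{KW})_*$ is the map of graded rings induced by a morphism of commutative ring spectra. Hence it sends the Hopf element $\eta$ in the source to the Hopf element $\eta$ in the target (both pulled back from the universal Hopf map through the respective units), and $1\mapsto 1$. On $\pi^{\A^1}_{0,0}$ it coincides with the canonical projection $\underline{GW}\twoheadrightarrow\underline{W}$ computed by Morel, which after inverting $\eta$ on the source becomes the identity $\underline{W}\to\underline{W}$. Since $\underline{W}[\eta,\eta^{-1}]$ is generated over $\underline{W}$ by $\eta^{\pm 1}$, this shows $(u_{KW})_*$ is the identity, and in particular an isomorphism.

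The only nontrivial input that I expect needs care is the sheaf-theoretic identification $\underline{K}^{MW}_*[\eta^{-1}]\cong \underline{W}[\eta,\eta^{-1}]$; this is classical in Morel's theory but should be checked at the level of Nisnevich sheaves (not just sections over fields), using the presentation of Milnor--Witt $K$-theory by generators and relations.
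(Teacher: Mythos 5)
The gap is in your comparison step. The assertion that on $\pi^{\A^1}_{0,0}$ the map $(u_{KW})_*$ ``coincides with the canonical projection $\underline{GW}\twoheadrightarrow\underline{W}$ computed by Morel'' is exactly the point that requires proof, and it does not follow formally from $(u_{KW})_*$ being induced by a map of ring spectra with $1\mapsto 1$ and $\eta\mapsto\eta$. Under Morel's isomorphism $\pi^{\A^1}_{0,0}\mathbb{S}(F)\cong GW(F)$ one has $\langle a\rangle=1+\eta[a]$, so knowing the images of $1$ and $\eta$ only reduces the question to identifying the image of $[a]\in\pi^{\A^1}_{-1,-1}\mathbb{S}(F)$ inside $KW^{1,1}(F)\cong W(F)\cdot\eta^{-1}$ with $(\langle a\rangle-1)\eta^{-1}$, i.e.\ to a compatibility between Morel's Milnor--Witt generators and the concrete Witt-theoretic classes in hermitian $K$-theory. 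This compatibility is not due to Morel alone and is the nontrivial input of the paper's proof: it quotes \cite[p.74]{Mor12} (the form $\langle a\rangle$ corresponds to $\langle a\rangle_\pi$ in the notation of \cite[Definition 6]{An12}) and then invokes \cite[Corollary 1]{An12}. Without some such input, ring-map formalism does not pin down the map: the relations of $K^{MW}_*$ constrain but do not by themselves force $[a]\mapsto(\langle a\rangle-1)\eta^{-1}$, and surjectivity/injectivity of the diagonal part of $(u_{KW})_*$ is precisely what is at stake. By contrast, the step you flag as delicate, $\underline{K}^{MW}_*[\eta^{-1}]\cong\underline{W}[\eta,\eta^{-1}]$, is the routine part (inverting $\eta$ kills $h$, degreewise one gets $\underline{W}$), and your degree count showing that only the $\beta^0$ part of $\pi^{\A^1}_{*,*}KW$ meets the diagonal is correct.

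Apart from this, your organization differs from the paper's in a way worth noting. You compute both sides as sheaves directly, using Morel's sheaf-level theorem $\pi^{\A^1}_{n,n}\mathbb{S}=\underline{K}^{MW}_{-n}$ (which in \cite{Mor12} is proved under a perfect-base-field hypothesis) and commuting homotopy sheaves with the sequential colimit defining $\mathbb{S}[\eta^{-1}]$. The paper instead only uses Morel's computation of sections over a point, deduces the isomorphism on sections over every field $F/k$ by a base change argument, and then upgrades to an isomorphism of sheaves using strict $\A^1$-invariance of the stable homotopy sheaves \cite[Remark 5.1.13]{Mor04a} together with \cite[Corollary 4.2.3]{Mor04b}. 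Either organization is viable once the $\pi_{0,0}$-comparison input above is supplied; the paper's route avoids invoking the full sheaf-level Milnor--Witt computation and handles the passage from fields to sheaves by the standard unramified-sheaf machinery.
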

\begin{proof}
First we claim that the unit morphism induces an isomorphism
\[
(u_{KW})_*\colon  \bigoplus_{n\in \Z} \mathbb{S}^{n,n}[\eta^{-1}](pt) \xrightarrow{\simeq} \bigoplus_{n\in \Z} KW^{n,n}(pt).
\]
This follows from Morel's computation \cite[Corollary~6.43]{Mor12}
\[
\bigoplus_{n\in \Z} \mathbb{S}^{n,n}(pt)\cong K^{MW}_*(k).
\]
Indeed, by \cite[p.74]{Mor12} for $a\in k^*$ the quadratic form $\langle a \rangle$ corresponds under this isomorphism to $\langle a \rangle_\pi$ in the notation of \cite[Definition~6]{An12}, so the claim follows from \cite[Corollary~1]{An12}.

Using a base change argument we obtain that the unit morphism induces an isomorphism
\[
 \bigoplus_{n\in \Z} \pi^{\A^1}_{n,n} \mathbb{S}[\eta^{-1}] (F) = \bigoplus_{n\in \Z} \mathbb{S}^{n,n}[\eta^{-1}](F) \xrightarrow{\simeq} \bigoplus_{n\in \Z} KW^{n,n}(F) = \bigoplus_{n\in \Z} \pi^{\A^1}_{n,n} KW (F)
\]
for every field $F/k$. Stable $\A^1$-homotopy groups are strictly $\A^1$-invariant by \cite[Remark 5.1.13]{Mor04a}, thus \cite[Corollary 4.2.3]{Mor04b} yields the claim of the lemma.
\end{proof}

We recall from \cite[\S 5.2]{Mor04a} that the homotopy $t$-structure on $\SH(k)$ is defined by the pair of full subcategories $(\SH(k)_{\ge 0}, \SH(k)_{\le0})$ with
\begin{align*}
&\SH(k)_{\ge 0}:=\{A\in \SH(k)\ |\ \pi^{\A^1}_m(A)_*=0\text{ for all } m<0\}\\
&\SH(k)_{\le 0}:=\{A\in \SH(k)\ |\ \pi^{\A^1}_m(A)_*=0\text{ for all } m>0\}
\end{align*}
 
The full subcategories $\SH(k)_{\ge n}$, $\SH(k)_{\le n}$ are defined as
\begin{align*}
&\SH(k)_{\ge n}:=\{A\in \SH(k)\ |\ \pi^{\A^1}_m(A)_*=0\text{ for all } m<n\}=\Sigma_{S^1}^n\SH(k)_{\ge 0}\\
&\SH(k)_{\le n}:=\{A\in \SH(k)\ |\ \pi^{\A^1}_m(A)_*=0\text{ for all } m>n\}=\Sigma_{S^1}^n\SH(k)_{\le 0}.
\end{align*}
We say that an object $A$ of $\SH(k)$ is {\em $n-1$-connected} if $A$ is in $\SH(k)_{\ge n}$.

Each $X\in \SH(k)$ admits a canonical distinguished triangle
\[
X_{\ge n}\to X\to X_{\le n-1}\to X_{\ge n}[1]
\]
with $X_{\ge n}$ in $\SH(k)_{\ge n}$ and $X_{\le n-1}$ in $\SH(k)_{\le n-1}$. Sending $X$ to $X_{\ge n}$ defines a right   adjoint $\tau_{\ge n}:\SH(k)\to \SH(k)_{\ge n}$ to the inclusion $\SH(k)_{\ge n}\to \SH(k)$; similarly,  sending $X$ to $X_{\le n-1}$ defines a left adjoint $\tau_{\le n-1}:\SH(k)\to \SH(k)_{\le n-1}$ to the inclusion $\SH(k)_{\le n-1}\to \SH(k)$. Furthermore, the map $p:X_{\ge n}\to X$ is characterized up to unique isomorphism by the fact $X_{\ge n}$ is in $\SH(k)_{\ge n}$ and $p$ induces an isomorphism on homotopy sheaves $\pi^{\A^1}_{a,b}$ for $a\ge b+n$. 

The heart of the $t$-structure is denoted $\pi_*^{\A^1}(k)$; we let $EM:\pi_*^{\A^1}(k)\to \SH(k)$ denote the inclusion, and $H_0:\SH(k)\to \pi_*^{\A^1}(k)$ the truncation functor $\tau_{\le 0}\tau_{\ge0}$.  As shown in \cite[Theorem 5.2.6]{Mor04a}, 
$\pi_*^{\A^1}(k)$ is equivalent to the category of {\em homotopy modules}, where a homotopy module is a graded strictly $\A^1$ invariant Nisnevich sheaf $M_*=\oplus_nM_n$ on $\Sm/k$, together with isomorphisms $\sHom(\G_m, M_{n+1})\cong M_n$. We will henceforth identify $\pi_*^{\A^1}(k)$ with the category of homotopy modules.

\begin{definition}
Denote by  $EM(W_\Q)$ the object $EM(H_0(\mathbb{S}[\eta^{-1}]_\Q))$ of $\SH(k)$, that is,  $EM(W_\Q)$ is the  object of $\SH(k)$ associated to the homotopy module $(\underline{W}_\Q)_*=\underline{W}_\Q[\eta,\eta^{-1}]$:
\[
\pi^{\A^1}_{a,b}EM(W_\Q)=\begin{cases}\underline{W}_\Q&\text{ for }a=b\\0&\text{else.}\end{cases}
\]
\end{definition}

\begin{theorem} \label{thm:decompose_KW}
The unit morphism $u_{KW}\colon  \mathbb{S} \to KW$ induces an isomorphism
\[
u_{KW}\colon EM(W_\Q) \xrightarrow{\simeq} KW^{(0)}_\Q.
\]
\end{theorem}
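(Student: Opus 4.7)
The plan is to first verify that $KW^{(0)}_\Q$ lies in the heart of Morel's $t$-structure with underlying homotopy module $\underline{W}_\Q[\eta,\eta^{-1}]$; then to use the rationalized, $\eta$-inverted unit together with the universal property of $\tau_{\le 0}$ to produce the comparison map from $EM(W_\Q)$; and finally to verify this map is an isomorphism by checking homotopy sheaves on diagonal bidegrees, which reduces to Lemma~\ref{lm:unit_iso}.

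The first step is to compute $\pi^{\A^1}_{*,*} KW^{(0)}_\Q$. By the remark following Theorem~\ref{thm:KW_represents}, $\pi^{\A^1}_{*,*}KW_\Q=\underline{W}_\Q[\eta,\eta^{-1},\beta,\beta^{-1}]$, with $\eta$ of bidegree $(1,1)$ and $\beta$ of bidegree $(8,4)$. Since $\rho^{st}_0\colon KW_\Q\to KW_\Q$ is tautologically a map of $\mathbb{S}_\Q$-module spectra, the induced endomorphism of $\pi^{\A^1}_{*,*}KW_\Q$ is linear over the image of the unit $\pi^{\A^1}_{*,*}\mathbb{S}_\Q\to\pi^{\A^1}_{*,*}KW_\Q$, and the rationalized form of Lemma~\ref{lm:unit_iso} shows this image contains $\underline{W}_\Q[\eta,\eta^{-1}]$. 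Combined with the defining relation $\rho^{st}_0(\beta^n)=\delta_{n,0}\beta^n$, this forces $\rho^{st}_0$ to be the projection onto the $\beta^0$-summand of $\underline{W}_\Q[\eta^{\pm},\beta^{\pm}]$. Hence $\pi^{\A^1}_{a,b}KW^{(0)}_\Q$ equals $\underline{W}_\Q$ when $a=b$ and vanishes otherwise, so $KW^{(0)}_\Q$ is in the heart.

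For the second step, post-composing the rationalized unit with $\rho^{st}_0$ gives $u_0\colon\mathbb{S}_\Q\to KW^{(0)}_\Q$, which factors through $\mathbb{S}_\Q[\eta^{-1}]$ since $\eta$ acts invertibly on $KW^{(0)}_\Q$. Morel's connectivity together with the identity $\pi_n^{\A^1}(\mathbb{S}[\eta^{-1}])_q=\colim_k\pi_n^{\A^1}(\mathbb{S})_{q+k}$ shows $\mathbb{S}_\Q[\eta^{-1}]\in\SH(k)_{\ge 0}$, while the first step places $KW^{(0)}_\Q$ in the heart, hence in $\SH(k)_{\le 0}$. The $t$-structure adjunction then yields a unique factorization $u_0=\bar u\circ p$ with $p\colon\mathbb{S}_\Q[\eta^{-1}]\to\tau_{\le 0}\mathbb{S}_\Q[\eta^{-1}]=EM(H_0\mathbb{S}_\Q[\eta^{-1}])=EM(W_\Q)$ (the last identification being Lemma~\ref{lm:unit_iso} again), producing the candidate $\bar u\colon EM(W_\Q)\to KW^{(0)}_\Q$. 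For the third step, both sides have homotopy sheaves concentrated on the diagonal $a=b$, so it suffices to check $\bar u$ induces an isomorphism on $\pi^{\A^1}_{n,n}$ for each $n$; since $p$ is an iso on $\pi^{\A^1}_{n,n}$ (truncation of a connective spectrum preserves $\pi_0^{\A^1}$) and $u_0$ is on $\pi^{\A^1}_{n,n}$ the composite $\pi^{\A^1}_{n,n}\mathbb{S}_\Q[\eta^{-1}]\to\pi^{\A^1}_{n,n}KW_\Q\to\pi^{\A^1}_{n,n}KW^{(0)}_\Q$, whose first arrow is an iso by the rationalized Lemma~\ref{lm:unit_iso} and whose second is the identity on the diagonal part (which lies entirely in the $\beta^0$ summand), the conclusion follows.

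The main obstacle is the first step, which amounts to pinning down the abstractly defined idempotent $\rho^{st}_0$ as a literal projection onto the $\beta^0$-subsheaf of $\pi^{\A^1}_{*,*}KW_\Q$; once this description is established, the remaining construction and verification are formal consequences of the $t$-structure machinery and Lemma~\ref{lm:unit_iso}.
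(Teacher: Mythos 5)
Your proposal follows essentially the same route as the paper: identify $\rho^{st}_0$ as the projection onto the $\beta^0$-summand of $\underline{W}_\Q[\eta^{\pm},\beta^{\pm}]$ using $\mathbb{S}$-linearity of stable operations together with Lemma~\ref{lm:unit_iso}, deduce that $KW^{(0)}_\Q$ lies in the heart, and then compare with $EM(W_\Q)$ using Morel's connectivity. Your second and third steps simply spell out the $t$-structure factorization that the paper leaves implicit; the only (harmless) imprecision is the claim that the image of the unit contains $\eta^{-1}$ --- it need only contain $\underline{W}_\Q[\eta]$, with $\eta^{-1}$-linearity of $\rho^{st}_0$ then following automatically from invertibility of $\eta$ on $KW$.
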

\begin{proof}
Theorem~\ref{thm:KW_represents} yields 
\[
\pi^{\A^1}_{*,*} KW_\Q = \underline{W}_\Q[\eta,\eta^{-1},\beta,\beta^{-1}]=\bigoplus_{m\in \Z} \underline{W}_\Q[\eta,\eta^{-1}]\beta^m.
\]
Stable operations are homomorphisms of $\mathbb{S}$-modules, thus by Lemma~\ref{lm:unit_iso} the operation $\rho_0^{st}$ acts trivially on $\bigoplus_{n\in \Z}\pi^{\A^1}_{n,n} KW_\Q = \underline{W}_\Q[\eta,\eta^{-1}]$ and maps all the other summands to zero. Hence the unit morphism induces an isomorphism 
\[
\bigoplus_{n\in \Z} \pi^{\A^1}_{n,n} \mathbb{S}[\eta^{-1}]_\Q \xrightarrow{\simeq} \bigoplus_{n\in \Z} \pi^{\A^1}_{n,n} KW_\Q^{(0)} 
\]
while $\pi^{\A^1}_{i+n,n} KW_\Q^{(0)}=0$ for $i\neq 0$. Then, using the fact that spectrum $\mathbb{S}$ is $(-1)$ connected by Morel's connectivity theorem, we obtain that the unit morphism induces an isomorphism
\[
EM(W_\Q) =EM(H_0( \mathbb{S}[\eta^{-1}]_\Q))\cong  EM(H_0(KW_\Q^{(0)})) = KW_\Q^{(0)}.
\]
\end{proof}

\begin{corollary}\label{cor:WittDecomp}
$KW_\Q \cong \bigoplus_{m\in \Z} \Sigma_T^{4m} EM(W_\Q)$.
\end{corollary}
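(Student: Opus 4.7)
The corollary should follow by a direct assembly of the three pieces already in hand, with essentially no new input required. My plan is as follows.

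First I would invoke the decomposition
\[
KW_\Q = \bigoplus_{m\in\Z} KW^{(m)}_\Q
\]
coming from the idempotent stable operations $\rho^{st}_m$, which was obtained immediately after Theorem~\ref{thm:rational_stable_operations_KW} via the isomorphism $\mathcal{B}^{KW_\Q}$.

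Next, I would apply Lemma~\ref{lm:iso_eigenspaces} to each summand, giving $KW^{(m)}_\Q \cong \Sigma^{4m}_T KW^{(0)}_\Q$, and then use Theorem~\ref{thm:decompose_KW} to identify the ``zeroth eigenspace'' with the Eilenberg--MacLane spectrum of the rational Witt sheaf: $KW^{(0)}_\Q \cong EM(W_\Q)$. Stringing these isomorphisms together yields
\[
KW_\Q \;\cong\; \bigoplus_{m\in\Z} \Sigma^{4m}_T KW^{(0)}_\Q \;\cong\; \bigoplus_{m\in\Z} \Sigma^{4m}_T EM(W_\Q),
\]
as desired.

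Since each step is either a definition or a previously established isomorphism, there is no real obstacle; the only thing to be slightly careful about is that the first ``decomposition'' is literally a direct sum (not merely a product) of the $KW^{(m)}_\Q$, which is why Theorem~\ref{thm:rational_stable_operations_KW}'s formulation via the homogeneous subring $(\prod_m KW_\Q^{*,*}(pt))_h$ is relevant. Otherwise the corollary is a formal consequence.
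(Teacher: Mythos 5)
Your argument is correct and is exactly the paper's own proof: combine the direct-sum decomposition $KW_\Q=\bigoplus_m KW^{(m)}_\Q$ with Lemma~\ref{lm:iso_eigenspaces} and Theorem~\ref{thm:decompose_KW}. Your closing remark about the direct sum (as opposed to a product) being guaranteed by the homogeneous-subring formulation in Theorem~\ref{thm:rational_stable_operations_KW} is a sensible point of care, though the paper does not spell it out.
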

\begin{proof}
This follows from the theorem and Lemma~\ref{lm:iso_eigenspaces}.
\end{proof}

\begin{corollary}\label{cor:Witt}
Let $X$ be a smooth variety. Then there exists a canonical isomorphism
\[
W^{n}_\Q(X)\cong \bigoplus_{m\in \Z} H^{4m+n}_{Zar}(X,\underline{W}_\Q).
\]
\end{corollary}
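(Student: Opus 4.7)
The plan is to compute $KW_\Q^{n,0}(X)$ in two different ways and compare. On the one hand, Theorem~\ref{thm:KW_represents} identifies $KW_\Q^{n,0}(X)$ with $W^n_\Q(X)$, giving the left-hand side of the claimed isomorphism. On the other hand, I would apply the rational decomposition of Corollary~\ref{cor:WittDecomp} and identify each summand with a Nisnevich cohomology group of $\underline{W}_\Q$.

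In detail: by Corollary~\ref{cor:WittDecomp},
\[
KW_\Q^{n,0}(X) \cong \bigoplus_{m\in\Z} \bigl(\Sigma^{4m}_T EM(W_\Q)\bigr)^{n,0}(X) = \bigoplus_{m\in\Z} EM(W_\Q)^{n+8m,\,4m}(X),
\]
since the $T$-suspension shifts cohomological bidegrees by $(2,1)$.

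The key step, and I expect the main technical hurdle, is the identification
\[
EM(W_\Q)^{p,q}(X) \cong H^{p-q}_{Nis}(X,\underline{W}_\Q).
\]
Since $EM(W_\Q)$ lies in the heart of the homotopy $t$-structure and corresponds to the homotopy module $\underline{W}_\Q[\eta,\eta^{-1}]$, whose weight components are all identified with $\underline{W}_\Q$ via the invertible action of $\eta$, the shifted spectrum $\Sigma^{p,q}EM(W_\Q)$ has its homotopy sheaves concentrated in $t$-degree $p-q$ with weight-zero part $\underline{W}_\Q$. The Morel equivalence between the heart and the category of homotopy modules (\cite[Theorem~5.2.6]{Mor04a}) then computes $\Hom_{\SH(k)}(\Sigma^\infty X_+, -)$ into this shifted Eilenberg--MacLane object as $H^{p-q}_{Nis}(X,\underline{W}_\Q)$.

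Substituting $(p,q)=(n+8m,4m)$ gives $p-q = n+4m$, and combining with Theorem~\ref{thm:KW_represents} yields
\[
W^n_\Q(X) \cong KW_\Q^{n,0}(X) \cong \bigoplus_{m\in\Z} H^{n+4m}_{Nis}(X,\underline{W}_\Q).
\]
To finish, since $\underline{W}_\Q$ is strictly $\A^1$-invariant, Morel's comparison theorem gives $H^\ast_{Nis}(X,\underline{W}_\Q)=H^\ast_{Zar}(X,\underline{W}_\Q)$ for smooth $X$, which converts the right-hand side to the claimed Zariski expression. The canonicity of the resulting isomorphism follows from the canonicity at each stage of the above chain.
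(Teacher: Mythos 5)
Your proof is correct and follows essentially the same route as the paper's: decompose $KW_\Q^{n,0}(X)$ via Corollary~\ref{cor:WittDecomp}, identify $EM(W_\Q)^{i,j}(X)\cong H^{i-j}_{Nis}(X,\underline{W}_\Q)$ using Morel's description of the heart as homotopy modules, and pass from Nisnevich to Zariski cohomology via strict $\A^1$-invariance. The paper's proof is simply a terser statement of the same chain of identifications, so your additional bookkeeping of the bidegree shift $(p,q)=(n+8m,4m)\mapsto p-q=n+4m$ just makes explicit what the paper leaves implicit.
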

\begin{proof}
This follows from the above corollary and the identification
\[
EM(W_\Q)^{i,j}(X)\cong H^{i-j}_{Nis}(X,\underline{W}_\Q)=H^{i-j}_{Zar}(X,\underline{W}_\Q).
\]
\end{proof}

\begin{remark} The last corollary could be seen as a statement about the rational degeneration of a Brown-Gersten type spectral sequence arising from the homotopy $t$-structure. The tower of spectra 
\[
\hdots \to KW_{\ge n+1}\to KW_{\ge n} \to KW_{\ge n-1} \to \hdots \to KW
\]
gives rise to a spectral sequence 
\[
E_2^{p,q}:= H^p_{Zar}(X, \pi_{-q,0}KW)\cong H^p_{Zar}(X, \underline{W}^{q}) \Longrightarrow KW^{p+q,0}(X)\cong W^{p+q}(X).
\]
Corollary~\ref{cor:WittDecomp} yields that rationally we have
\[
(KW_\Q)_{\ge n}\cong \bigoplus_{m\ge n/4} \Sigma_T^{4m} EM(W_\Q)
\]
and all the morphisms $(KW_\Q)_{\ge n+1}\to (KW_\Q)_{\ge n}$ are split injections, thus rationally the spectral sequence degenerates.
\end{remark}

\begin{remark}
Let $X$ be a smooth variety over $\mathbb{R}$, then \cite[Theorem~1.1]{Jac15} gives an isomorphism 
\[
H^q_{Zar}(X, \underline{W}_\Q)= H^q_{Zar}(X, \mathcal{I}^n_\Q)\cong H^q_{sing}(X(\mathbb{R}), \Q).
\]
Hence Corollary~\ref{cor:Witt} together with the rational degeneration of Atiyah-Hirzebruch spectral sequence for (topological) $K$-theory of real vector bundles identifies
\[
W^{n}_\Q(X)\cong KO_{top}^n(X(\mathbb{R}))\otimes \Q.
\]
In fact, using real algebraic geometry techniques one can show \cite{Br84} that it is sufficient to invert $2$ to obtain a similar isomorphism:
\[
W^{n}(X)\otimes \Z[\tfrac{1}{2}] \cong KO_{top}^n(X(\mathbb{R}))\otimes \Z[\tfrac{1}{2}].
\]
\end{remark}

\begin{corollary} \label{cor:no_extensions}
The canonical morphism $u\colon \mathbb{S}\to EM(W_\Q)$ induces isomorphisms
\begin{enumerate}
\item
$\mathcal{H}om_{\SH(k)}(EM(W_\Q), EM(W_\Q))\xrightarrow{\simeq} \mathcal{H}om_{\SH(k)}(\mathbb{S}_\Q, EM(W_\Q))=EM(W_\Q)$,
\item
$u\wedge \id \colon EM(W_\Q)=\mathbb{S}\wedge EM(W_\Q)\xrightarrow{\simeq} EM(W_\Q)\wedge EM(W_\Q)$.
\end{enumerate}
Here $\mathcal{H}om$ denotes the internal Hom.
\end{corollary}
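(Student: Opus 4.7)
The plan is to derive both statements from Theorems~\ref{thm:rational_stable_operations_KW} and~\ref{thm:cooperations} combined with the splitting $KW_\Q \cong \bigoplus_{m\in\Z} \Sigma_T^{4m} EM(W_\Q)$ of Corollary~\ref{cor:WittDecomp}. Let $i\colon EM(W_\Q)\hookrightarrow KW_\Q$ denote the inclusion of the $m=0$ summand and $p\colon KW_\Q\twoheadrightarrow EM(W_\Q)$ its retraction, so that $ip = \rho^{st}_0$, $pi = \id$, and $iu = u_{KW}$; everything is rationalized throughout.

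For (2), Theorem~\ref{thm:cooperations} yields a $\pi^{\A^1}_{*,*}$-isomorphism $f = \id_{KW_\Q}\wedge\mathcal{B}\colon \bigoplus_m \Sigma_T^{4m}KW_\Q \xrightarrow{\sim} KW_\Q\wedge KW_\Q$, which is an isomorphism in $\SH(k)_\Q$ by strict $\A^1$-invariance. I transport the idempotent $\rho^{st}_0\wedge\rho^{st}_0$ on the right, whose image is $EM(W_\Q)\wedge EM(W_\Q)$, back through $f$: on the second smash factor, $\rho^{st}_0(\beta^m) = \delta_{m,0}$ shows that $\id\wedge\rho^{st}_0$ corresponds to the projection onto the $m=0$ summand of $\bigoplus_m \Sigma_T^{4m}KW_\Q$, while on the first factor $\rho^{st}_0\wedge\id$ corresponds to applying $\rho^{st}_0$ inside each summand. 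The joint fixed subobject is therefore $\Sigma_T^0 EM(W_\Q) = EM(W_\Q)$, and the restriction of $f$ to it equals $\id\wedge u_{KW} = (i\wedge i)\circ(\id\wedge u)$; identifying the image with $EM(W_\Q)\wedge EM(W_\Q)$, the induced iso is $\id\wedge u$, which coincides with $u\wedge\id$ via commutativity of $EM(W_\Q)$ (equivalently, one re-runs the argument with $\mathcal{B}\wedge\id$ in place of $\id\wedge\mathcal{B}$).

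For (1), by strict $\A^1$-invariance it suffices to check that the map $\mathcal{H}om(u, EM(W_\Q))$ induces a bijection of bigraded abelian groups at each residue field $F/k$. Over $F$ any $\phi \in [EM(W_\Q), EM(W_\Q)]_{*,*}$ extends to $\bar\phi = i\phi p \in [KW_\Q, KW_\Q]_{*,*}$, which by Theorem~\ref{thm:rational_stable_operations_KW} is determined by the tuple $(\bar\phi(\beta^m))_m$; the computation $\bar\phi(\beta^m) = i\phi p(\beta^m) = \delta_{m,0}\cdot i(\phi\circ u)$ shows that $\phi$ is determined by $\phi\circ u \in \pi^{\A^1}_{*,*}EM(W_\Q)(F)$, and conversely any such element is realized by the operation of multiplication by it. Hence $\phi\mapsto \phi\circ u$ is a bijection at each $F$, lifting to the sheaf isomorphism of (1).

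The main obstacle is the bookkeeping in (2): carefully tracking how the idempotent $\rho^{st}_0\wedge\rho^{st}_0$ pulls back through the cooperation isomorphism and verifying the resulting iso is explicitly $u\wedge\id$ rather than a twisted variant. Once this is done, part (1) is essentially a field-wise computation using Theorem~\ref{thm:rational_stable_operations_KW}, promoted to a spectrum-level statement by strict $\A^1$-invariance.
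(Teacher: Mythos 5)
Your argument is correct and is essentially the same as the paper's: both reduce, via strict $\A^1$-invariance and base change, to showing the unit induces isomorphisms $KW^{(0)}_\Q$-(co)operations over fields, and both extract the $m=0$ piece from the isomorphisms of Theorems~\ref{thm:rational_stable_operations_KW} and~\ref{thm:cooperations} using the splitting $KW_\Q\cong\bigoplus_m KW^{(m)}_\Q$. The only stylistic difference is that the paper states the $m=0$ extraction abstractly as a retract/direct-summand argument, whereas you track the idempotents $\rho^{st}_0$ and the extension $\phi\mapsto i\phi p$ explicitly; the mathematical content is identical.
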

\begin{proof}
Using Theorem~\ref{thm:decompose_KW} we identify $EM(W_\Q)=KW^{(0)}_\Q$. It is sufficient to show that the unit morphism $u\colon \mathbb{S}\to KW^{(0)}_\Q$ induces isomorphisms on the sheaves of stable $\A^1$-homotopy groups of respective spectra. The base change argument combined with the theory of strictly $\A^1$-invariant sheaves in the same way as in the proof of Lemma~\ref{lm:unit_iso} yields that it is sufficient to show that the unit morphism induces isomorphisms
\begin{align*}
&u^{KW^{(0)}_\Q}\colon (KW^{(0)}_\Q)^{*,*}(KW^{(0)}_\Q)\xrightarrow{\simeq} (KW^{(0)}_\Q)^{*,*}(pt),\\
&u_{KW^{(0)}_\Q}\colon  (KW^{(0)}_\Q)_{*,*}(pt)\xrightarrow{\simeq} (KW^{(0)}_\Q)_{*,*}(KW^{(0)}_\Q).
\end{align*}

Note that for every $m$ morphism $ \Sigma^{4m}_T \beta^m\colon \Sigma^{4m}_T \mathbb{S}\to KW_\Q$ factors in a natural way through $KW^{(m)}_\Q$, hence for the morphism $\mathcal{B}$ given in Definition~\ref{def:B} we have
\[
\mathcal{B}^{KW_\Q}=\prod_{m\in \Z}  (\Sigma^{4m}_T\beta^m)^{KW_\Q} \colon \prod_{m\in \Z} KW^{*,*}_Q(KW^{(m)}_\Q) \to \prod_{m\in \Z} KW^{*,*}_Q(\Sigma^{4m}_T \mathbb{S} ).
\]
This homomorphism is an isomorphism by Theorem~\ref{thm:rational_stable_operations_KW}, thus, in particular, the unit morphism induces an isomorphism 
\[
u^{KW_\Q}\colon KW_\Q^{*,*}(KW^{(0)}_\Q)\xrightarrow{\simeq} KW_\Q^{*,*}(pt).
\]
The claim for operations follows, since $u^{KW^{(0)}_\Q}$ is a retract of isomorphism $u^{KW_\Q}$.

The case of cooperations is similar: by Theorem~\ref{thm:cooperations} we have an isomorphism
\[
\mathcal{B}_{KW_\Q}= \bigoplus_{m\in \Z}  (\Sigma^{4m}_T\beta^m)_{KW_\Q}  \colon \bigoplus_{m\in \Z} (KW_\Q)_{*,*}( \Sigma^{4m}_T \mathbb{S}) \xrightarrow{\simeq} \bigoplus_{m\in \Z} (KW_\Q)_{*,*}( KW^{(m)}_\Q)
\]
which yields an isomorphism
\[
u_{KW_\Q}\colon (KW_\Q)_{*,*}(pt) \xrightarrow{\simeq} (KW_\Q)_{*,*}(KW^{(0)}_\Q).
\]
The homomorphism $u_{KW_\Q^{(0)}}$ is an isomorphism being a direct summand of an isomorphism $u_{KW_\Q}$.
\end{proof}

\section{Homotopy sheaves of $\mathbb{S}[\eta^{-1}]_\Q$}

Let  $\Spt_T(k)$ denote the model category of symmetric $T$-spectra over $k$, with its motivic model structure (see e.g. \cite{Jardine00}). As discussed in \cite[\S3]{Lev13b}, one can realize the category  $ \SH(k)_{\ge n}$ as the homotopy category of a suitable right Bousfield localization of  $\Spt_T(k)$. More specifically, let $K_n$ be the set of objects of $\Spt_T(k)$,
\[
K_n:=\{F_q(\Sigma^p_{S^1}X_+)\ |\ X\in \Sm/k, p\ge n+q \}.
\]
Here $F_q:\Spc_\bullet(k)\to \Spt_T(k)$ is the left adjoint to the $q$th evaluation functor $\ev_q:\Spt_T(k)\to \Spc_\bullet(k)$, $\ev_q((A_0, A_1,\ldots))=A_q$, and $K_n$ is denoted $K_{n,-\infty}$ in \cite[\hbox{\it loc. cit.}]{Lev13b}. Let $\Spt_T(k)(K_n)$ denote the right Bousfield localization of $\Spt_T(k)$ with respect to the set of $K_n$-local maps in $\Spt_T(k)$. The general theory of Bousfield localization tells us that $\Hho\Spt_T(k)(K_n)$ is equivalent to the essentially  full subcategory of $\SH(k)$ consisting of the  $K_n$-colocal objects of $\Spt_T(k)$; we henceforth consider  $\Hho\Spt_T(k)(K_n)$ as this essentially full subcategory of $\SH(k)$.

\begin{lemma} \label{lem:Bousfield} $\Hho\Spt_T(k)(K_n)=\SH(k)_{\ge n}$.
\end{lemma}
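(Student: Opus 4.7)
The plan is to identify $\Hho\Spt_T(k)(K_n)$ with the essentially full subcategory $L(K_n) \subseteq \SH(k)$ of $K_n$-colocal objects, i.e., the smallest subcategory closed under weak equivalences and homotopy colimits containing $K_n$, and then prove $L(K_n) = \SH(k)_{\ge n}$ directly by controlling connectivity. The argument splits cleanly into the two inclusions.

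For $L(K_n) \subseteq \SH(k)_{\ge n}$, each generator rewrites as $F_q(\Sigma^p_{S^1}X_+) \simeq \Sigma^{p-2q,-q}\Sigma^\infty_T X_+$, so $\pi^{\A^1}_m(F_q(\Sigma^p_{S^1}X_+))_* \cong \pi^{\A^1}_{m+q-p}(\Sigma^\infty_T X_+)_*$. Morel's connectivity theorem makes this vanish for $m < p-q$, and the hypothesis $p \ge n+q$ yields $K_n \subseteq \SH(k)_{\ge n}$. The category $\SH(k)_{\ge n}$ is closed under homotopy colimits by the long exact sequence of $\pi^{\A^1}_*$, so the inclusion follows.

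For $\SH(k)_{\ge n} \subseteq L(K_n)$, take $X \in \SH(k)_{\ge n}$ and form the cellular approximation $\epsilon\colon X^{\operatorname{cell}} \to X$ provided by the right Bousfield localization, with $X^{\operatorname{cell}} \in L(K_n)$ and $\Hom_{\SH(k)}(K, \epsilon)[s]$ an isomorphism for every $K \in K_n$ and $s \in \Z$. Set $C := \cofib(\epsilon)$. The long exact sequence places $C$ in $\SH(k)_{\ge n}$, so it suffices to prove $\pi^{\A^1}_m(C)_r = 0$ for all $m \ge n$ and $r \in \Z$; then $C = 0$ and $X \simeq X^{\operatorname{cell}} \in L(K_n)$. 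The class $\mathcal{C} := \{K \in \SH(k) : \Hom_{\SH(k)}(K, C)[s] = 0 \text{ for all } s\}$ is a localizing subcategory containing $K_n$. Using $[\Sigma^{a,b}\Sigma^\infty_T X_+, C] = \pi^{\A^1}_{a-b}(C)_b(X)$ on the shifts $\Sigma^t F_q(\Sigma^p_{S^1}X_+) = F_q(\Sigma^{p+t}_{S^1}X_+) \in \mathcal{C}$ (ranging over $t \in \Z$, $q \ge 0$, $p \ge n+q$) forces $\pi^{\A^1}_m(C)_r = 0$ for every $m \in \Z$ and every $r \le 0$.

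The main obstacle is getting the corresponding vanishing for $r > 0$, since probing by $K_n$ directly only constrains the bidegrees with $r \le 0$. I resolve this by producing further elements of $L(K_n)$ via iterated cofiber sequences. For $p \ge n$ and any smooth $Z$, the unit section $Z \hookrightarrow Z \times \G_m$ gives
\[
\Sigma^p\Sigma^\infty_T Z_+ \longrightarrow \Sigma^p\Sigma^\infty_T(Z \times \G_m)_+ \longrightarrow \Sigma^{p+1,1}\Sigma^\infty_T Z_+,
\]
whose first two terms are $F_0(\Sigma^p_{S^1}Z_+)$ and $F_0(\Sigma^p_{S^1}(Z\times\G_m)_+)$ and thus lie in $K_n$ (both $Z$ and $Z\times\G_m$ are smooth), so $\Sigma^{p+1,1}\Sigma^\infty_T Z_+ \in L(K_n)$. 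Iterating, with $Z$ replaced by $Z \times \G_m^{j}$ at each stage, yields $\Sigma^{p+s,s}\Sigma^\infty_T Z_+ \in L(K_n) \subseteq \mathcal{C}$ for all $p \ge n$, $s \ge 0$, $Z \in \Sm/k$, and consequently $\pi^{\A^1}_p(C)_s(Z) = 0$ throughout that range. Combining this with the vanishing for $r \le 0$ and with $C \in \SH(k)_{\ge n}$ gives $\pi^{\A^1}_m(C)_r = 0$ for all $(m,r)$, completing the proof.
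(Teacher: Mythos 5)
Your first inclusion is correct and unproblematic: the generators satisfy $F_q(\Sigma^p_{S^1}X_+)\simeq \Sigma^{p-2q,-q}\Sigma^\infty_T X_+\in\SH(k)_{\ge p-q}\subseteq\SH(k)_{\ge n}$ by Morel's connectivity theorem, and $\SH(k)_{\ge n}$ is closed under homotopy colimits, so the $K_n$-colocal (= cellular) objects lie in $\SH(k)_{\ge n}$. The gap is in the second inclusion, at the assertion that the cellular approximation $\epsilon\colon X^{\mathrm{cell}}\to X$ induces isomorphisms on $\Hom_{\SH(k)}(K[s],-)$ for \emph{every} $s\in\Z$, $K\in K_n$. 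The general theory of right Bousfield localization only says that $\epsilon$ is a $K_n$-colocal equivalence, i.e.\ that the homotopy function complexes $\operatorname{map}(K,X^{\mathrm{cell}})\to \operatorname{map}(K,X)$ are weak equivalences; this gives isomorphisms on $[\Sigma^s_{S^1}K,-]$ only for $s\ge 0$, hence $[K[s],C]=0$ only for $s\ge 1$, where $C=\cofib(\epsilon)$. Translating to homotopy sheaves and using $C\in\SH(k)_{\ge n}$, this yields $\pi^{\A^1}_m(C)_r=0$ for $r\le 0$ and $m\neq n$, and your (nice) $\G_m$-cofiber trick then only reaches $m\ge n+1$ in positive weights, since each cofiber sequence loses the bottom shift. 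The edge degree $m=n$ is left completely open in every weight, so you cannot conclude $C=0$; in fact the negative-shift (already the $s=0$) vanishing you assert is essentially equivalent to the statement that $\epsilon$ induces an isomorphism on $\pi^{\A^1}_{a,b}$ for $a\ge b+n$, which is exactly the nontrivial input the paper takes from \cite[Lemma 4.3]{Lev13b}.

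To close the gap you would have to prove $[K[s],C]=0$ for all $s\ge 0$ (the $s=0$ case is the crux), which does not follow from the colocal-equivalence property as a black box: one needs the explicit small-object/cell-attachment construction of $X^{\mathrm{cell}}$ as a telescope, together with compactness of the generators $F_q\Sigma^p_{S^1}X_+$, to see that every map $K[s]\to C$ ($s\ge 0$) dies — at which point you are reproving the cited lemma. For comparison, the paper's proof is a short formal argument: it quotes \cite[Lemma 4.5]{Lev13b} for $f^s_nX\in\SH(k)_{\ge n}$ (your first inclusion) and \cite[Lemma 4.3]{Lev13b} for the isomorphism on $\pi^{\A^1}_{a,b}$, $a\ge b+n$, and then identifies $f^s_nX$ with $\tau_{\ge n}X$ by the universal property of the truncation. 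Your overall architecture (connectivity of generators, closure under homotopy colimits, $\G_m$-cofiber sequences to access positive weights) is sound and would be a reasonable self-contained substitute, but only after the orthogonality of $C$ to $K_n$ in all non-negative shifts is actually established rather than asserted.
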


\begin{proof}Let $r_n^s:\SH(k)\to \Hho\Spt_T(k)(K_n)$ denote the right adjoint to the inclusion functor $i_n^s:\Hho\Spt_T(k)(K_n)\to \SH(k)$ given by the general theory of right Bousfield localization, and let $f_n^s=i_n^s\circ r_n^s$. By \cite[lemma 4.5]{Lev13b}, $f_n^sX$ is in $\SH(k)_{\ge n}$ for all $X\in \SH(k)$ and thus $\Hho\Spt_T(k)(K_n)\subset \SH(k)_{\ge n}$.  Thus, by the universal property of $\tau_{\ge n}$, we  have for $X\in \SH(k)$  a commutative triangle
\[
\xymatrix{
f_n^sX\ar[r]\ar[d]&X\\
\tau_{\ge n}X\ar[ur]
}
\]
By \cite[lemma 4.3]{Lev13b}, the map $f_n^sX\to X$ induces an isomorphism on $\pi^{\A^1}_{a,b}$ for all $a\ge b+n$, and thus $f_n^sX\to \tau_{\ge n}X$ is an isomorphism. Thus $\SH(k)_{\ge n}\subset \Hho\Spt_T(k)(K_n)$.
\end{proof}

\begin{lemma}\label{lem:smash}
For every $m,n\in \Z$ we have $ \SH(k)_{\ge m}\wedge \SH(k)_{\ge n}  \subset  \SH(k)_{\ge m+n}$.
\end{lemma}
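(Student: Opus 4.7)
The plan is to exploit the presentation of $\SH(k)_{\ge N}$ as a right Bousfield localization provided by Lemma~\ref{lem:Bousfield}. Each inclusion $\SH(k)_{\ge N}\hookrightarrow \SH(k)$ is a left adjoint (to $\tau_{\ge N}$), so $\SH(k)_{\ge N}$ is closed under homotopy colimits in $\SH(k)$; moreover, general Bousfield theory identifies it with the smallest full subcategory of $\SH(k)$ containing the set of generators $K_N$ and closed under homotopy colimits.

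First I would fix $Y\in \SH(k)_{\ge n}$ and introduce the full subcategory
\[
\sC_Y=\{X\in \SH(k)\mid X\wedge Y\in \SH(k)_{\ge m+n}\}.
\]
Since $-\wedge Y$ preserves homotopy colimits and $\SH(k)_{\ge m+n}$ is itself closed under them, $\sC_Y$ is closed under homotopy colimits. Hence it suffices to verify $K_m\subset \sC_Y$. Fixing a generator $G=F_q(\Sigma^p_{S^1}X_+)\in K_m$ (so $X\in\Sm/k$ and $p\ge m+q$), a symmetric argument applied to the full subcategory $\sD_G=\{Y\mid G\wedge Y\in\SH(k)_{\ge m+n}\}$ reduces the problem further to showing $G\wedge G'\in \SH(k)_{\ge m+n}$ for every generator $G'=F_{q'}(\Sigma^{p'}_{S^1}X'_+)\in K_n$.

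For this last step I would use the standard monoidal structure on symmetric $T$-spectra and pointed motivic spaces, which gives the natural identifications
\[
F_q(A)\wedge F_{q'}(B)\simeq F_{q+q'}(A\wedge B),\qquad \Sigma^p_{S^1}X_+\wedge \Sigma^{p'}_{S^1}X'_+\simeq \Sigma^{p+p'}_{S^1}(X\times X')_+.
\]
Combining them yields
\[
G\wedge G'\simeq F_{q+q'}\bigl(\Sigma^{p+p'}_{S^1}(X\times X')_+\bigr),
\]
with $X\times X'\in\Sm/k$ and $p+p'\ge (m+q)+(n+q')=(m+n)+(q+q')$, so $G\wedge G'$ is itself a generator in $K_{m+n}$ and hence lies in $\SH(k)_{\ge m+n}$.

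The main obstacle is bookkeeping rather than any single calculation: one must justify the reduction to generators, which requires knowing that $-\wedge Y$ preserves homotopy colimits (standard for the stable motivic symmetric $T$-spectrum model structure) and that $\SH(k)_{\ge N}$ really is the homotopy-colimit closure of $K_N$, which is the substance of the colocal description in Lemma~\ref{lem:Bousfield}. Once these are in place, the smash product of a pair of generators is a direct computation and the inclusion on generators is immediate from the arithmetic of indices.
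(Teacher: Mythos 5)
Your proposal is correct and follows essentially the same route as the paper: pass to the Bousfield-localization model $\Hho\Spt_T(k)(K_N)$ via Lemma~\ref{lem:Bousfield}, use that $\SH(k)_{\ge N}$ is the homotopy-colimit (cellular) closure of $K_N$ and that $-\wedge B$ preserves homotopy colimits to reduce to generators, and then compute $F_q\Sigma^p_{S^1}X_+\wedge F_{q'}\Sigma^{p'}_{S^1}X'_+\cong F_{q+q'}\Sigma^{p+p'}_{S^1}(X\times X')_+$ and check indices. Your two-step reduction (fix $Y$, then fix $G$) simply spells out the ``it suffices to show $K_m\wedge K_n\subset K_{m+n}$'' step that the paper states more tersely.
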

\begin{proof}   By Lemma~\ref{lem:Bousfield} we may show instead that  
\[
\Hho\Spt_T(k)(K_m)\wedge \Hho\Spt_T(k)(K_n)\subset 
\Hho\Spt_T(k)(K_{m+n}). 
\]
By \cite[theorem 2.4]{Lev13b} $\Hho\Spt_T(k)(K_m)$ is the full subcategory of $\SH(k)$ of $K_m$-cellular objects. 

We recall from \cite[\S 4.3]{Jardine00} that  the smash product of symmetric spectra defines a closed symmetric monoidal structure on $\Spt_T(k)$, that is, there is an internal Hom functor $\sHom_{\SH(k)}(-,-)$ and natural isomorphism
\[
\Hom_{\SH(k)}(A\wedge B, C)\cong \Hom_{\SH(k)}(A, \sHom_{\SH(k)}(B, C)).
\]
Thus,  $-\wedge B$ preserves  retracts and  colimits for all $B\in \Spt_T(k)$ and it therefore suffices to show that  $K_m\wedge K_n\subset K_{m+n}$.  

For this,  we recall from \cite[Corollary 4.18]{Jardine00} the existence of a natural isomorphism  $F_a\sX\wedge F_b\sY\cong F_{a+b}(\sX\wedge \sY)$ for $\sX, \sY$ in $\Spc_\bullet(k)$. Thus, for $X, Y\in \Sm/k$, we have 
\[
F_q\Sigma^p_{S^1}X_+\wedge F_{q'}\Sigma^{p'}_{S^1}Y_+\cong
F_{q+q'}\Sigma^{p+p'}_{S^1} (X\times Y)_+,
\]
so $K_m\wedge K_n\subset K_{m+n}$, as desired.
\end{proof}

\begin{corollary}\label{cor:smash_tstructure}
For $A\in \SH(k)_{\ge 1}$ we have $A\wedge A \cong A$ if and only if $A\cong 0$.
\end{corollary}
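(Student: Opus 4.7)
The plan is to reduce everything to iterated application of Lemma~\ref{lem:smash} and then invoke non-degeneracy of the homotopy $t$-structure on $\SH(k)$.

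First, the ``if'' direction is trivial: the zero object satisfies $0\wedge 0\cong 0$. For the ``only if'' direction, I would argue as follows. Suppose $A\in \SH(k)_{\ge 1}$ and we have an isomorphism $A\wedge A\cong A$. Applying Lemma~\ref{lem:smash} once gives $A\cong A\wedge A\in \SH(k)_{\ge 2}$. Now induct: assuming $A\in \SH(k)_{\ge n}$ for some $n\ge 1$, the isomorphism $A\cong A\wedge A$ together with Lemma~\ref{lem:smash} (applied to $A\in \SH(k)_{\ge 1}$ and $A\in \SH(k)_{\ge n}$) gives $A\cong A\wedge A\in \SH(k)_{\ge n+1}$. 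Hence $A\in \SH(k)_{\ge n}$ for every $n\ge 1$.

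By the definition of the subcategories $\SH(k)_{\ge n}$ recalled after Lemma~\ref{lm:unit_iso}, this means $\pi^{\A^1}_m(A)_*=0$ for all $m\in\Z$. To conclude $A\cong 0$ it remains to observe that the homotopy sheaves form a conservative family on $\SH(k)$: the suspensions $F_q(\Sigma^p_{S^1}X_+)$, as $X$ ranges over $\Sm/k$ and $p,q\in\Z$, form a set of compact generators, and the group $\Hom_{\SH(k)}(F_q(\Sigma^p_{S^1}X_+),A)$ is a section of the Nisnevich sheaf $\pi^{\A^1}_{p-q,-q}(A)$ on $X$. Since all these sheaves vanish, every map from a generator into $A$ is zero, hence $A\cong 0$.

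There is no real obstacle here; the only point to be careful about is to invoke the non-degeneracy $\bigcap_{n\ge 1}\SH(k)_{\ge n}=0$ of the homotopy $t$-structure, which follows either from the compact generation above or directly from Morel's description of $\SH(k)_{\ge n}$ via vanishing of homotopy sheaves together with the standard fact that the homotopy sheaves detect isomorphisms in $\SH(k)$.
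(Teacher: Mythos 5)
Your argument is correct and is essentially the paper's own proof: both boil down to applying Lemma~\ref{lem:smash} repeatedly to show that $A\cong A\wedge A$ forces $A$ into $\SH(k)_{\ge n}$ for arbitrarily large $n$, and then invoking the non-degeneracy of the homotopy $t$-structure (the paper phrases this as a contradiction with a maximal $n$ such that $A\in\SH(k)_{\ge n}$, you phrase it as an induction, which is the same thing). The only cosmetic difference is your added justification of non-degeneracy via compact generators, which the paper simply asserts; just note that $\Hom_{\SH(k)}(F_q\Sigma^p_{S^1}X_+,A)$ is a section of the \emph{presheaf} whose sheafification is $\pi^{\A^1}_{p-q,-q}(A)$, so one should quote Morel's result that the homotopy sheaves detect isomorphisms, as you do in your final sentence.
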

\begin{proof} Suppose that $A\in \SH(k)_{\ge 1}$ and  $A\wedge A \cong A$. The homotopy $t$-structure is non-degenerate, as $\pi^{\A^1}_{a,b}A=0$ for all $a,b$ if and only if $A\cong 0$. Thus, if $A\not\cong0$, there is a maximal $n\ge 1$ so that $A\in \SH(k)_{\ge n}$, in other words, $\pi^{\A^1}_{n+b,b}A\neq 0$ for some $b\in \Z$. But  by Lemma~\ref{lem:smash},  $A\wedge A$ is in $\SH(k)_{\ge 2n}$ and thus  $\pi^{\A^1}_{n+b,b}A\wedge A=0$ for all $b\in \Z$, contrary to the assumption $A\wedge A\cong A$. Thus, we must have $A\cong 0$.
\end{proof}

\begin{theorem}\label{thm:Main}
The canonical morphism $\mathbb{S}[\eta^{-1}]_\Q\to EM(W_\Q)$ is an isomorphism.
\end{theorem}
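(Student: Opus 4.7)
The plan is to form the homotopy fiber $F$ of the canonical map $\phi\colon\mathbb{S}[\eta^{-1}]_\Q\to EM(W_\Q)$ and to show $F\cong 0$ by verifying the two hypotheses of Corollary~\ref{cor:smash_tstructure}: namely $F\in\SH(k)_{\geq 1}$ and $F\wedge F\cong F$. Equivalently, writing $C$ for the cofiber of $\phi$, I will show $C\in\SH(k)_{\geq 2}$ and $C\wedge C\cong\Sigma_{S^1}C$; then $F=\Sigma^{-1}_{S^1}C$ satisfies both conditions.

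For the connectivity claim, Morel's $\A^1$-connectivity theorem gives $\mathbb{S}\in\SH(k)_{\geq 0}$, and for each $n\geq 0$ one has $\pi^{\A^1}_{a,b}(\Sigma^{-n,-n}\mathbb{S})=\pi^{\A^1}_{a+n,b+n}\mathbb{S}$, so every term of the telescope defining $\mathbb{S}[\eta^{-1}]$ is still in $\SH(k)_{\geq 0}$; rationalization and homotopy colimits preserve this, so $\mathbb{S}[\eta^{-1}]_\Q\in\SH(k)_{\geq 0}$. The target $EM(W_\Q)$ is in $\SH(k)_{\geq 0}$ by construction, and Lemma~\ref{lm:unit_iso} together with the diagonal concentration of $\pi^{\A^1}_{*,*}EM(W_\Q)$ shows that $\phi$ induces an isomorphism on $\pi^{\A^1}_0(-)_*=\underline{W}_\Q[\eta,\eta^{-1}]$. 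The long exact sequence of homotopy sheaves then forces $\pi^{\A^1}_mC=0$ for $m\leq 1$, i.e.\ $C\in\SH(k)_{\geq 2}$.

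The heart of the argument is the identity $C\wedge C\cong\Sigma_{S^1}C$, which I would obtain by first showing $C\wedge EM(W_\Q)\cong 0$. The unit $u\colon\mathbb{S}\to EM(W_\Q)$ factors as $\mathbb{S}\xrightarrow{j}\mathbb{S}[\eta^{-1}]_\Q\xrightarrow{\phi}EM(W_\Q)$, so smashing with $EM(W_\Q)$ yields $u\wedge\id=(\phi\wedge\id)\circ(j\wedge\id)$. The map $j\wedge\id$ is an isomorphism because $EM(W_\Q)$ is already rational and $\eta$-periodic, while $u\wedge\id$ is an isomorphism by Corollary~\ref{cor:no_extensions}(2); hence $\phi\wedge\id_{EM(W_\Q)}$ is an isomorphism, and smashing the cofiber sequence $\mathbb{S}[\eta^{-1}]_\Q\to EM(W_\Q)\to C$ with $EM(W_\Q)$ gives $C\wedge EM(W_\Q)\cong 0$. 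Smashing the same cofiber sequence instead with $C$, and using that $C$ is itself rational and $\eta$-periodic so $\mathbb{S}[\eta^{-1}]_\Q\wedge C\cong C$, the triangle becomes $C\to 0\to C\wedge C$, giving $C\wedge C\cong\Sigma_{S^1}C$. Therefore $F\wedge F\cong\Sigma^{-2}_{S^1}(C\wedge C)\cong\Sigma^{-1}_{S^1}C\cong F$, and Corollary~\ref{cor:smash_tstructure} yields $F\cong 0$, so $\phi$ is an isomorphism.

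The step I expect to be the main obstacle is identifying $\phi\wedge\id_{EM(W_\Q)}$ as an isomorphism: this is where the full strength of Corollary~\ref{cor:no_extensions} enters, converting the abstract ``absence of self-extensions'' statement for $EM(W_\Q)$ into the concrete absorption $EM(W_\Q)\wedge C\cong 0$ that drives the self-smash identity. Everything else is formal long-exact-sequence bookkeeping together with the non-degeneracy of the homotopy $t$-structure packaged into Corollary~\ref{cor:smash_tstructure}.
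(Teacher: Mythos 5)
Your proof is correct and follows the same strategy as the paper: form the fiber (the paper's $A$, your $F=\Sigma^{-1}_{S^1}C$), show it is $0$-connected, show smashing with $EM(W_\Q)$ kills it using Corollary~\ref{cor:no_extensions}, deduce $A\wedge A\cong A$, and conclude via Corollary~\ref{cor:smash_tstructure}. The only cosmetic differences are that the paper takes the canonical $t$-structure truncation triangle for $\mathbb{S}[\eta^{-1}]_\Q$ directly (so $A\in\SH(k)_{\ge 1}$ is immediate) rather than running the long exact sequence on the cofiber, and it works only with the fiber rather than passing between fiber and cofiber.
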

\begin{proof}
Recall that by Morel's connectivity theorem $\mathbb{S}$ is $(-1)$-connected, thus there exists a canonical triangle
\[
A\to \mathbb{S}[\eta^{-1}]_\Q\to EM(W_\Q) \to A[1]
\]
with $A\in \SH(k)_{\ge 1}$. We clearly have
\[
EM(W_\Q) =\mathbb{S}\wedge EM(W_\Q)=\mathbb{S}[\eta^{-1}]_\Q\wedge EM(W_\Q),
\]
thus smashing the triangle with $EM(W_\Q)$ and using Corollary~\ref{cor:no_extensions} we obtain
\[
A\wedge EM(W_\Q)=0.
\]
Smashing the triangle with $A$ we see that $A\wedge A\cong A$. Corollary~\ref{cor:smash_tstructure} yields $A=0$ and the claim follows.
\end{proof}

\begin{corollary} 
$KW_\Q \cong \bigoplus_{m\in \Z} \Sigma_T^{4m} \mathbb{S}[\eta^{-1}]_\Q$.
\end{corollary}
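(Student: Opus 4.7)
The plan is to combine the two main results already established in the excerpt: Corollary~\ref{cor:WittDecomp}, which gives the decomposition $KW_\Q \cong \bigoplus_{m\in \Z} \Sigma_T^{4m} EM(W_\Q)$ via the idempotents $\rho^{st}_m$ and the periodicity isomorphism, and Theorem~\ref{thm:Main}, which identifies the Eilenberg--MacLane object $EM(W_\Q)$ with $\mathbb{S}[\eta^{-1}]_\Q$.

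Concretely, I would first invoke Corollary~\ref{cor:WittDecomp} to split $KW_\Q$ as a direct sum of suspensions of $EM(W_\Q)$, indexed by the Bott-periodicity shift. Then I would apply Theorem~\ref{thm:Main} term-by-term, replacing each $EM(W_\Q)$ summand by $\mathbb{S}[\eta^{-1}]_\Q$. Since $\Sigma_T^{4m}$ is an autoequivalence of $\SH(k)$ and commutes with arbitrary coproducts, the isomorphism $\mathbb{S}[\eta^{-1}]_\Q \xrightarrow{\simeq} EM(W_\Q)$ induces compatible isomorphisms $\Sigma_T^{4m}\mathbb{S}[\eta^{-1}]_\Q \xrightarrow{\simeq} \Sigma_T^{4m}EM(W_\Q)$ for every $m \in \Z$, and these assemble into the desired isomorphism.

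There is essentially no obstacle here: this corollary is a formal consequence of the two results combined, and the only thing to verify is compatibility of the two displayed decompositions, which is immediate since the identification in Theorem~\ref{thm:Main} is given by the canonical unit-induced map and the summands in Corollary~\ref{cor:WittDecomp} were constructed precisely via the unit morphism composed with Bott multiplication. Thus the proof reduces to a single line substituting one isomorphism into the other.
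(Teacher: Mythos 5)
Your proposal is correct and follows exactly the paper's own argument: the paper's proof is the single line ``This follows from Corollary~\ref{cor:WittDecomp} and Theorem~\ref{thm:Main}.'' You have simply spelled out the substitution in more detail, which is sound.
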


\begin{proof} This follows from Corollary~\ref{cor:WittDecomp} and Theorem~\ref{thm:Main}.
\end{proof}

\begin{remark} The symmetry involution $\P^1\wedge\P^1\to \P^1\wedge \P^1$ induces an involution $\tau$ of the sphere spectrum, and thereby a decomposition in $\SH(k)[1/2]$:
\[
 \mathbb{S}[1/2]= \mathbb{S}^+\oplus \mathbb{S}^-.
 \]
 Morel has givenan explicit computation of $\tau$ in terms of $\eta$, which shows that $\eta$ is zero on $ \mathbb{S}^+$ and is an isomorphism on $\mathbb{S}^-$. In particular, 
 \[
 \mathbb{S}^-\cong  \mathbb{S}[\eta^{-1},1/2].
 \]
We can rephrase Theorem~\ref{thm:Main} as saying that the unit map $\mathbb{S}_\Q\to EM(W_\Q)$ induces an isomorphism   $\mathbb{S}^-_\Q\to EM(W_\Q)$.

As $\mathbb{S}$ is the unit for the monoidal structure on $\SH(k)$, the decomposition $ \mathbb{S}[1/2]= \mathbb{S}^+\oplus \mathbb{S}^-$ induces a decomposition of $\SH(k)[1/2]$ as a product of symmetric monoidal triangulated categories
\[
\SH(k)[1/2]=\SH(k)^+\times \SH(k)^-,
\]
with $ \mathbb{S}^+$ the unit for $\SH(k)^+$ and $ \mathbb{S}^-$ the unit for $\SH(k)^-$. Combining a theorem of R\"ondigs-{\O}stv{\ae}r \cite[Theorem 1.1]{RO}, with a result of Cisinsky-D\'eglise \cite[Theorem 16.2.13]{CD} shows that the unit map to the motivic cohomology spectrum $H\Z$ induces an isomorphism $\mathbb{S}^+_\Q\to H\Q$, which in turn defines an equivalence of $\SH(k)^+_\Q$ with the homotopy category of $H\Q$-modules. The theorem of R\"ondigs-{\O}stv{\ae}r shows that this category is equivalent to Voevodsky's triangulated category of motives with $\Q$-coefficients, $DM(k)_\Q$.

In the next section we will use Theorem~\ref{thm:Main}  to give an analogous description of   $\SH(k)_\Q^-$; see Theorem~\ref{thm:Equiv}  and Corollary~\ref{cor:Equiv}.
\end{remark}

\begin{theorem}[motivic Serre finiteness]\label{thm:SFinite} Let $k$ be a field. The  sheaf $\pi^{\A^1}_{a,b}(\mathbb{S}^-_k)$ is torsion for $a\neq b$, i.e., $\pi^{\A^1}_n(\mathbb{S}^-_k)_*$ is torsion for $n\neq0$. The
 sheaf $\pi^{\A^1}_n(\mathbb{S}_k)_q$ is torsion for  $n\neq0$ and $q\ge0$.
\end{theorem}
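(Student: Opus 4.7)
The plan is to reduce the statement immediately to Theorem~\ref{thm:Main} and then keep track of which bidegrees survive rationalization. As recorded in the preceding remark, $\mathbb{S}^- \cong \mathbb{S}[\eta^{-1},1/2]$, so $\mathbb{S}^-_\Q = \mathbb{S}[\eta^{-1}]_\Q$, and Theorem~\ref{thm:Main} then provides an isomorphism $\mathbb{S}^-_\Q \xrightarrow{\sim} EM(W_\Q)$ in $\SH(k)$. By the very construction of $EM(W_\Q)$, its bigraded homotopy sheaves are concentrated on the diagonal: $\pi^{\A^1}_{a,b}EM(W_\Q) = \underline{W}_\Q$ when $a=b$ and $0$ otherwise.

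Next, I would promote this rational vanishing to a torsion statement for $\mathbb{S}^-_k$ itself. Since rationalization is a filtered colimit over multiplication-by-$n$ maps and $\pi^{\A^1}_{a,b}$ commutes with filtered colimits, one has
\[
\pi^{\A^1}_{a,b}(\mathbb{S}^-_k) \otimes \Q \cong \pi^{\A^1}_{a,b}(\mathbb{S}^-_\Q),
\]
and the right-hand side vanishes for $a\neq b$ by the previous step. Equivalently, $\pi^{\A^1}_n(\mathbb{S}^-_k)_*$ is torsion for every $n \neq 0$; the case $n<0$ is in any event automatic, since Morel's connectivity theorem makes $\mathbb{S}_k$ (and hence $\mathbb{S}^-_k$, obtained from $\mathbb{S}_k$ by inverting $\eta$ and $2$) $(-1)$-connected.

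For the second claim, I would combine the first part with the Cisinski--D\'eglise / R\"ondigs--{\O}stv{\ae}r identification $\mathbb{S}^+_\Q \cong H\Q$ recalled in the remark above. This yields a rational splitting
\[
\pi^{\A^1}_{n+q,q}(\mathbb{S}_k)_\Q \cong \sH^{-(n+q)}(\Q(-q)) \oplus \pi^{\A^1}_{n+q,q}(EM(W_\Q)).
\]
For $n>0$ and $q\ge 0$, the second summand vanishes by the first part of the theorem, while the motivic cohomology summand vanishes because either $-q<0$ (vanishing of motivic cohomology in negative weights) or $q=0$ and $-n<0$ (vanishing of ordinary cohomology in negative degrees). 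The case $n<0$ is once again covered by Morel's connectivity.

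The main obstacle is essentially hidden in Theorem~\ref{thm:Main}: all the genuine content---including Ananyevskiy's computation of stable operations and cooperations of rational Witt theory---has already been packaged into the rational equivalence $\mathbb{S}^-_\Q \cong EM(W_\Q)$. Given that input, the finiteness theorem above becomes a bookkeeping exercise tracking which bigraded homotopy sheaves can survive after rationalization; the only routine care needed is the compatibility of $\pi^{\A^1}_{a,b}$ with rationalization and the weight/degree vanishing properties of motivic cohomology.
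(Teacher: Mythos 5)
Your argument is correct and follows the paper's own proof essentially verbatim: the first assertion is deduced from Theorem~\ref{thm:Main} (so $\mathbb{S}^-_{k\Q}\cong EM(W_\Q)$ lies in the heart and has homotopy sheaves concentrated on the diagonal), and the second from the plus/minus splitting together with the Cisinski--D\'eglise identification of the plus part and the vanishing of motivic cohomology in negative weights, with $n<0$ handled by Morel's connectivity. The only additions beyond the paper's write-up are the (correct and routine) remarks that $\pi^{\A^1}_{a,b}$ commutes with rationalization and that $\mathbb{S}^-_k$ is $(-1)$-connected.
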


\begin{proof}  The first assertion follows directly from Theorem~\ref{thm:Main}: $\mathbb{S}^-_{k\Q}\cong EM(W_\Q)$ implies that $\mathbb{S}^-_{k\Q}$ is in the heart of the $t$-structure and therefore $\pi^{\A^1}_{a,b}(\mathbb{S}^-_k)_\Q=0$ for $a\neq b$. 

Using the decomposition $\mathbb{S}_{k\Q}=\mathbb{S}_{k\Q}^+\oplus \mathbb{S}_{k\Q}^-$ and the theorem of Cisinski-D\'eglise gives us 
\[
\pi^{\A^1}_{a,b}(\mathbb{S}_k)(F)_\Q=H^{-a}(F,\Q(-b))\oplus \pi^{\A^1}_{a,b}(\mathbb{S}^-_k)(F)_\Q.
\]
For $a\neq b$, only the term $H^{-a}(F,\Q(-b))$ remains; this is zero for $b>0$ by the vanishing of motivic cohomology in negative weights, or for $b=0$, $a\neq0$. 
\end{proof}

\begin{remark} For $0>a>b$, the argument used above shows that 
\[
\pi^{\A^1}_{a,b}(\mathbb{S}_k)_\Q(F)=H^{-a}(F,\Q(-b))
\]
which is in general not zero: for instance, if $K$ is a number field, $H^1(K,\Q(n))$ is a $\Q$-vector space of dimension $r_2$ (the number of conjugate pairs of complex embeddings of $K$) if $n$ is even and of dimension $r_1+r_2$ ($r_1$ is the number of real embeddings of $K$) for $n$ odd. For $F=K(t_1,\ldots, t_r)$ a pure transcendental extension of $K$, the cup product with $\{t_1,\ldots, t_a\}\in H^a(F,\Q(a))$ is an injective map $H^1(K,\Q(n))\to H^{a+1}(F, \Q(a+n))$, for all $a=1,\ldots, r$. As the map $H^{-a}(F,\Q(-b))\to H^{-a}(F',\Q(-b))$ is injective for each field extension $F\to F'$,  the sheaf $\pi^{\A^1}_{a,b}(\mathbb{S}_k)_\Q$ is non-zero for $0>a>b$ and $k$ a characteristic zero field.  

On the other hand, by Morel's $\A^1$-connectedness theorem, $\mathbb{S}_k$ is in $\SH(k)_{\ge0}$, so $\pi^{\A^1}_{a,b}(\mathbb{S}_k)=0$ for $a<b$. 
\end{remark}

\section{The category $\SH(k)^-_\Q$}\label{sec:WittMot}

We recall briefly the construction of  the $\A^1$-derived category $D_{\A^1}(k,\Z)$.  We have the category of complexes of  abelian groups on $\Sm/k$, $C(Sh^{Nis}_{Ab}(\Sm/k))$. For $X\in \Sm/k$, we have the free abelian sheaf $\Z(X)$ generated by $X$, that is the sheafification of the presheaf
\[
Y\mapsto \oplus_{s\in \Hom_{\Sm/k}(Y,X)}\Z
\]
We write $D^{n+1}(X)$ for  $\text{cone}(id:\Z(X)[n]\to \Z(X)[n])$ with canonical map $i_n:\Z(X)[n]\to D^{n+1}(X)$.

We give the category of complexes of Nisnevich sheaves of abelian groups $C(Sh_{Ab}^{Nis}(\Sm/k))$ the  model structure with weak equivalences the Nisnevich quasi-isomorphisms and generating set of cofibrations the maps $i_n:\Z(X)[n]\to D^{n+1}(X)$, $0\to \Z(X)[n]$,  $X\in \Sm/k$, $n\in \Z$.    Let $C_{\A^1}(Sh_{Ab}^{Nis}(\Sm/k))$ be the Bousfield localization with respect to the projection maps $\Z(X\times\A^1)[n]\to \Z(X)[n]$, $X\in \Sm/k$. An object $A$ of the derived category $D(Sh_{Ab}^{Nis}(\Sm/k))=\Hho C(Sh_{Ab}^{Nis}(\Sm/k))$ is called $\A^1$-local if 
\[
\Hom_{D(Sh_{Ab}^{Nis}(\Sm/k))}(C, A)\xrightarrow{p^*}\Hom_{D(Sh_{Ab}^{Nis}(\Sm/k))}(C\otimes\Z(\A^1), A)
\]
is an isomorphism for all $C$. We let $D_{\A^1}^{eff}(k)\subset D(Sh_{Ab}^{Nis}(\Sm/k))$ be the full subcategory of $\A^1$-local objects. The general theory of Bousfield localization tells us that the localization map $D(Sh_{Ab}^{Nis}(\Sm/k))\to \Hho C_{\A^1}(Sh_{Ab}^{Nis}(\Sm/k))$ admits a left adjoint 
\[
L_{\A^1}:\Hho C_{\A^1}(Sh_{Ab}^{Nis}(\Sm/k))\to D(Sh_{Ab}^{Nis}(\Sm/k))
\]
with essential image equal to $D_{\A^1}^{eff}(k,\Z)$ and induces an equivalence of triangulated categories $:\Hho C_{\A^1}(Sh_{Ab}^{Nis}(\Sm/k))\sim D_{\A^1}^{eff}(k)$.

One then forms the category of $\G_m$-spectrum objects in $C_{\A^1}(Sh_{Ab}^{Nis}(\Sm/k))$, following Hovey \cite{Hovey} and Jardine \cite{Jardine00}: objects are sequences $\sC:=(C_0, C_1,\ldots)$ with $C_n$ in $C_{\A^1}(Sh_{Ab}^{Nis}(\Sm/k))$ together with bonding morphisms $\epsilon_n:\Z(\G_m)\otimes C_n\to C_{n+1}$. This carries a model structure (the stable model structure) and $D_{\A^1}(k,\Z)$ is the associated homotopy category.

There is also a version of a stable model category of $\G_m$-symmetric spectrum objects in $C_{\A^1}(Sh_{Ab}^{Nis}(\Sm/k))$, which carries a closed symmetric monoidal structure and has equivalent homotopy category, making $D_{\A^1}(k,\Z)$ a tensor triangulated category.

Replacing sheaves of abelian groups with sheaves of $\Q$-vector spaces defines the triangulated tensor categories $D_{\A^1}^{eff}(k,\Q)$ and $D_{\A^1}(k,\Q)$

As in \cite[\S 5.3.35]{CD}, the  Quillen equivalence of the $\Q$-localization of the  category of classical spectra with the model category of complexes of $\Q$-vector spaces induces an equivalence of $\SH(k)_\Q$ with the rational $\A^1$-derived category $D_{\A^1}(k,\Q)$. The equivalence arises via a Quillen pair of functors of model categories, giving a pair of exact adjoint functors of triangulated categories
\[
N:\SH(k)\xymatrixrowsep{2pt}\xymatrix{\ar@<3pt>[r]&\ar@<3pt>[l]}D_{\A^1}(k,\Z):EM
\]

This Quillen pair induces a Quillen equivalence of the rational localizations of these model categories, giving us the adjoint pair of equivalences of triangulated tensor categories
\[
N:\SH(k)_\Q\xymatrixrowsep{2pt}\xymatrix{\ar@<3pt>[r]&\ar@<3pt>[l]}D_{\A^1}(k,\Q):EM
\]
The decomposition of the unit $\mS_k[1/2]$ into $\mS_k^+$ and $\mS_k^-$ gives the corresponding decomposition of $\SH(k)_\Q$ and $D_{\A^1}(k,\Q)$, 
\[
\SH(k)_\Q=\SH(k)^+_\Q\times\SH(k)_\Q^-;\quad D_{\A^1}(k,\Q)=D_{\A^1}(k,\Q)^+\times D_{\A^1}(k,\Q)^-
\]
compatible with $N$ and $EM$. In particular, we have the adjoint pair of equivalences of triangulated tensor categories
\begin{equation}\label{eqn:Equiv1}
N:\SH(k)^-_\Q\xymatrixrowsep{2pt}\xymatrix{\ar@<3pt>[r]&\ar@<3pt>[l]}D_{\A^1}(k,\Q)^-:EM
\end{equation}

We proceed to construct a category of {\em Witt motives over $k$}.

Consider the sheaf of  Witt rings $\underline{W}$ on $\Sm/k$. The identification $\underline{W}\cong \pi_{n,n}(\mS_k)$ (for all $n>0$) gives us the canonical isomorphism of $W$-modules $\epsilon_W:\underline{W}\to \sHom(\G_m, \underline{W})$. If $M$ is a sheaf of $\underline{W}$-modules, we have the canonical morphism
$\epsilon_M:M\to \sHom(\G_m, M)$ of $\underline{W}$-modules defined as the composition
\[
M\xrightarrow{\alpha} \sHom_{\underline{W}}(\underline{W}, M)\xrightarrow{\epsilon_W(1)^*}\sHom(\G_m, M)
\]
where $\alpha$ is the canonical map, described on sections as $m\mapsto f_m$, $f_m(w)=wm$, and $\epsilon_W(1):\G_m\to \underline{W}$ is the evalution of $\epsilon_W$ on the unit section of $W$. Thus, each strictly $\A^1$-invariant $\underline{W}$-module $M$ gives us an object 
\[
M_*:=((M, M,\ldots), \epsilon_M:M\to \sHom(\G_m,M)) 
\]
of $D_{\A^1}(k,\Z)$, which one easily sees is in the minus summand $D_{\A^1}(k,\Z)^-$ (after inverting 2). 

Explicitly, if we take the model $\underline{W}=\underline{K}_0^{W}:=\underline{K}_0^{MW}/(h)$, the map $\epsilon_W$ sends a section $x\in \underline{W}(U)$ to the section $ p_1^*(\eta\cdot[t])\cdot p_2^*x\in \underline{W}(\G_m\wedge U_+)$, where $t$ is the canonical unit on $\G_m$ and $[t]\in \underline{K}^{MW}_1(\G_m)$ the corresponding section. The map $\epsilon_M$ has a similar expression. Alternatively, one can simply compose $x$ with the stable Hopf map $\eta:\G_m\wedge \mS\to \sS$. The fact that $x\circ\eta=\eta\cdot p_1^*[t]\cdot p_2^*x$ follows from the commutativity of the diagram
\[
\xymatrix{
\G_m\wedge\mS\ar[r]^{t}\ar[d]_\eta&\G_m\wedge\mS\ar[d]^\eta\\
\mS\ar[r]_{\text{id}}&\mS,}
\]
noting that the canonical unit on $\G_m$ is just the identity map $\text{id}_{\G_m}$. This gives an explicit description of the map $\epsilon_M:M\to \sHom(\G_m, M)$ as $\epsilon_M(x)=p_1^*(\eta\cdot [t])\cdot p_2^*x$ for $x$ a section of $M$; the alternate description in terms of composition with the algebraic Hopf map is only available for $M$ satisfying a $\G_m$-stability condition.

We have the category of complexes of  $\underline{W}$-modules on $\Sm/k$, $C(Sh^{Nis}_{\underline{W}}(\Sm/k))$. For $X\in \Sm/k$, we have the free $\underline{W}$-module $\underline{W}(X):=\underline{W}\otimes_\Z\Z(X)$ generated by $X$, 
and let $D^{n+1}_W(X):=\underline{W}\otimes_\Z D^{n+1}(X)$. 

We give  $C(Sh^{Nis}_{\underline{W}}(\Sm/k))$ the (combinatorial, proper) model structure for which the weak equivalences are the (Nisnevich) quasi-isomorphisms and generating cofibrations   $i_n:\underline{W}(X)[n]\to D^{n+1}_W(X)$, $0\to \underline{W}(X)[n]$, $X\in \Sm/k$, $n\in \Z$. The derived category  of Nisnevich sheaves of $\underline{W}$-modules on $\Sm/k$, $D(Sh^{Nis}_{\underline{W}}(\Sm/k))$, is the homotopy category of $C(Sh^{Nis}_{\underline{W}}(\Sm/k))$. The construction outlined above gives us for each $C$ in $C(Sh^{Nis}_{\underline{W}}(\Sm/k))$  a canonical morphism $\epsilon_C:C\to \sHom(\G_m, C)$ in $C(Sh^{Nis}_{\underline{W}}(\Sm/k))$.

We call an object $C$ of $C(Sh^{Nis}_{\underline{W}}(\Sm/k))$ $\A^1$-local if $C$ is fibrant and the pull-back $C\to\sHom(\A^1, C)$  is a quasi-isomrophism. We call $C$ a $\G_m$-infinite loop complex if $\epsilon_C:C\to \sHom(\G_m, C)$ is a quasi-isomorphism. 

We denote by $DM_W(k)$ the full subcategory of $D(Sh^{Nis}_{\underline{W}}(\Sm/k))$ with objects the complexes that are fibrant,  $\A^1$-local and $\G_m$-infinite loop complexes. 
 
 \begin{lemma} $DM_W(k)$ is equivalent to the Verdier localization of $D(Sh^{Nis}_{\underline{W}}(\Sm/k))$ with respect to the localizing subcategory $\sC$ generated by the following two types of objects:
 \begin{enumerate}
 \item Cone$(\text{const}_C:C\to \sHom(\A^1, C))$.
 \item Cone$(\epsilon_C:C\to \sHom(\G_m, C))$
 \end{enumerate}
 More precisely, the inclusion $DM_W(k)\to D(Sh^{Nis}_{\underline{W}}(\Sm/k))$  admits a left adjoint $L_{\A^1, W}:
 D(Sh^{Nis}_{\underline{W}}(\Sm/k))\to DM_W(k)$ which sends the objects of type (1) and (2) to zero objects, and the resulting exact functor $D(Sh^{Nis}_{\underline{W}}(\Sm/k))/\sC\to DM_W(k)$ is an equivalence.
 \end{lemma}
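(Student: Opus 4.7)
The plan is to realize $DM_W(k)$ as the homotopy category of a left Bousfield localization of $C(Sh^{Nis}_{\underline{W}}(\Sm/k))$, so that the existence of the left adjoint $L_{\A^1,W}$ and the equivalence with the Verdier quotient both follow by standard arguments of Hirschhorn and Neeman.

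First, I would choose a set $S$ of morphisms between compact generators encoding the two locality conditions. For (1), take $S_1 = \{\underline{W}(X\times\A^1)[n]\to\underline{W}(X)[n]\}_{X\in\Sm/k,\,n\in\Z}$; by the usual yoga of representable sheaves, a complex $D$ is $\A^1$-local in the sense of the lemma iff it is $S_1$-local. For (2), use the tensor-hom adjunction to rewrite ``$\epsilon_D$ is a quasi-isomorphism'' as the condition that $\Hom(\underline{W}(X)[n], D) \to \Hom(\underline{W}(X)[n], \sHom(\G_m, D))$ is an iso for all $X,n$; the right side equals $\Hom(\underline{W}(X)\otimes\underline{W}(\G_m)[n], D)$, so $D$ is $\G_m$-infinite loop iff it is right orthogonal to the cones of the set $S_2 = \{\epsilon_{\underline{W}(X)[n]}\}_{X,n}$. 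Both $S_1$ and $S_2$ consist of maps between compact generators.

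Second, since $C(Sh^{Nis}_{\underline{W}}(\Sm/k))$ is a proper combinatorial simplicial model category, Hirschhorn's theorem yields a left Bousfield localization at $S = S_1 \cup S_2$ whose fibrant objects are exactly the fibrant, $\A^1$-local, $\G_m$-infinite loop complexes, i.e., the objects of $DM_W(k)$. A functorial fibrant replacement in the localized structure descends to an exact left adjoint $L_{\A^1,W}\colon D(Sh^{Nis}_{\underline{W}}(\Sm/k)) \to DM_W(k)$ to the inclusion. By construction $L_{\A^1,W}$ kills the cones of the morphisms in $S$, and since it is exact and coproduct-preserving, it kills the whole localizing subcategory $\sC$ they generate.

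Third, for the Verdier description, observe that $\sC$ is compactly generated inside the compactly generated triangulated category $D(Sh^{Nis}_{\underline{W}}(\Sm/k))$; Neeman's localization theorem then identifies the Verdier quotient $D/\sC$ with the right orthogonal $\sC^\perp$, and the universal functor $D \to D/\sC$ admits a right adjoint agreeing with the inclusion $\sC^\perp \hookrightarrow D$. A direct check identifies $\sC^\perp$ with $DM_W(k)$: the right orthogonality conditions to the cones of types (1) and (2) unwind to $\A^1$-locality and $\G_m$-infinite loop stability respectively. Combining this with $L_{\A^1,W}$ factoring through $D/\sC$ yields the asserted equivalence.

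The main obstacle I expect is the step of showing that the cones over $\epsilon_C$ for \emph{all} $C$ are generated as a localizing subcategory by the cones over $\epsilon_{\underline{W}(X)[n]}$. Unlike the $\A^1$-case, where $\text{const}_C$ is visibly a natural transformation of left-adjoint-type functors commuting with colimits, the map $\epsilon_C$ is built out of the $\underline{W}$-module structure together with the stable Hopf action, so one must verify carefully that the assignment $C \mapsto \text{Cone}(\epsilon_C)$ is compatible with coproducts, shifts, and cofiber sequences before one can reduce to compact generators and invoke the Bousfield/Neeman machinery.
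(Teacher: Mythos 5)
Your approach matches the paper's own proof, which is even terser: it simply forms the left Bousfield localization of the proper combinatorial model category $C(Sh^{Nis}_{\underline{W}}(\Sm/k))$ at the maps $\text{const}_C\colon C\to \sHom(\A^1,C)$ and $\epsilon_C\colon C\to\sHom(\G_m,C)$ for $C$ running over fibrant models of a set of generators, and cites properness and combinatoriality for existence; the identification of the localized homotopy category with the Verdier quotient by $\sC$ is left implicit. Your use of Neeman's localization theorem is the standard way to make the ``more precisely'' clause explicit.

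The concern you flag at the end is not a genuine obstruction, and it is worth recording the resolution. Since $\underline{W}(\G_m)$ is a compact object of $D(Sh^{Nis}_{\underline{W}}(\Sm/k))$, the derived internal hom $\sHom(\G_m,-)$ commutes with coproducts as well as with triangles, so the assignment $C\mapsto\text{Cone}(\epsilon_C)$ is exact and coproduct-preserving; the localizing subcategory it generates on the compact generators therefore agrees with the one generated by all $\text{Cone}(\epsilon_C)$. One small slip: the orthogonality condition you extract for $\G_m$-stability is, literally, right orthogonality to $\text{Cone}(\mu_{\underline{W}(X)[n]})$, where $\mu_{\underline{W}(X)[n]}\colon \underline{W}(X)\otimes\underline{W}(\G_m)[n]\to\underline{W}(X)[n]$ is the adjoint mate of $\epsilon_{\underline{W}(X)[n]}$, rather than to $\text{Cone}(\epsilon_{\underline{W}(X)[n]})$ itself. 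The two families do generate the same localizing subcategory (in the quotient by the $\mu$-cones, $-\otimes\G_m$ becomes a natural autoequivalence with inverse $\sHom(\G_m,-)$, so $\epsilon$ is also inverted; the converse follows by uniqueness of adjoints), but they should be distinguished in the argument. Finally, since $\sHom(\G_m,C)$ is only homotopically meaningful for fibrant $C$, you should, as the paper does, take fibrant models of $\underline{W}(X)[n]$ before forming $\epsilon$.
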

 
 \begin{proof} This follows by forming the left Bousfield localization $C_{\A^1,\G_m}(Sh^{Nis}_{\underline{W}}(\Sm/k))$ of $C(Sh^{Nis}_{\underline{W}}(\Sm/k))$ with respect to the set of maps $\text{const}_C:C\to \sHom(\A^1, C)$ and $\epsilon_C:C\to \sHom(\G_m, C)$, as $C$ runs over a set of fibrant models of generators for $C(Sh^{Nis}_{\underline{W}}(\Sm/k))$. This localization exists because $C(Sh^{Nis}_{\underline{W}}(\Sm/k))$  is a proper combinatorial model category.
 \end{proof}
 
We give  $DM_W(k)$ the tensor structure induced from $D(Sh^{Nis}_{\underline{W}}(\Sm/k))$ by the localization $D(Sh^{Nis}_{\underline{W}}(\Sm/k))\to D(Sh^{Nis}_{\underline{W}}(\Sm/k))/\sC$. We call $DM_W(k)$ the triangulated tensor category of {\em Witt motives} over $k$. 
 
Sending $C$ in $C(Sh^{Nis}_{\underline{W}}(\Sm/k))$ to the object $C_*:=((C, C, \ldots), \epsilon_C:C\to \sHom(\G_m, C))$ factors through the localization and thus gives a well-defined exact functor 
\[
Mot:DM_W(k)\to D_{\A^1}(k,\Z).
\]
After inverting 2, the image of $Mot$ lies in $D_{\A^1}(k,\Z)^-$. One can also compose $Mot$ with the Eilenbeg-MacLane functor to give the exact functor
\[
EM\circ Mot:DM_W(k)\to \SH(k).
\]

Replacing $\underline{W}$ with $\underline{W}_\Q$  gives us the full triangulated sub-category $DM_W(k)_\Q$ of $D(Sh^{Nis}_{\underline{W}_\Q}(\Sm/k))$ and exact functor
\[
Mot:DM_W(k)_\Q\to D_{\A^1}(k,\Q)^-.
\]

\begin{theorem} \label{thm:Equiv} The functor $Mot:DM_W(k)_\Q\to D_{\A^1}(k,\Q)^-$ is an equivalence of triangulated tensor categories. Combined with the equivalence \eqref{eqn:Equiv1} gives us an   equivalence of triangulated tensor categories
$DM_W(k)_\Q\cong \SH(k)^-_\Q$.
\end{theorem}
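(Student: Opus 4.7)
The plan is to prove this by constructing an exact tensor functor $\Phi\colon D_{\A^1}(k,\Q)^-\to DM_W(k)_\Q$ and showing it is a two-sided inverse to $Mot$. The key input is the identification of the unit $\mathbf{1}$ of $D_{\A^1}(k,\Q)^-$ with $Mot(\underline{W}_\Q)$: under the equivalence \eqref{eqn:Equiv1}, $\mathbf{1}$ corresponds to $\mathbb{S}^-_{k,\Q}$, which by Theorem~\ref{thm:Main} is $EM(W_\Q)$, and this is in turn the image under $EM$ of the constant $\G_m$-spectrum $Mot(\underline{W}_\Q)=((\underline{W}_\Q,\underline{W}_\Q,\dots),\epsilon_{\underline{W}_\Q})$.

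Given this identification, every object $\mathcal{C}=(C_0,C_1,\dots)$ of $D_{\A^1}(k,\Q)^-$ becomes a module over $Mot(\underline{W}_\Q)$, so that its level-$0$ complex $C_0$ inherits a natural $\underline{W}_\Q$-module structure. The $\A^1$-locality of $C_0$ is automatic from $\mathcal{C}\in D_{\A^1}(k,\Q)^-$, and the required condition that $\epsilon_{C_0}\colon C_0\to\sHom(\G_m,C_0)$ be a quasi-isomorphism follows from the $\G_m$-stability of $\mathcal{C}$ combined with $\eta$-inversion, since the latter identifies the abstract $\G_m$-looping bonding of $\mathcal{C}$ with the explicit $\epsilon$-bonding coming from the $Mot(\underline{W}_\Q)$-action. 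Setting $\Phi(\mathcal{C}):=C_0$ yields the desired functor, and the relation $\Phi\circ Mot\simeq\id$ is immediate since $Mot$ is defined as the constant-spectrum construction.

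The main obstacle lies in the reverse identity $Mot\circ\Phi\simeq\id$. Because $\eta$ is invertible in the minus part, the $\G_m$-suspension $\Sigma^{1,1}$ becomes canonically isomorphic to the identity on $D_{\A^1}(k,\Q)^-$, so the bonding maps of a $\G_m$-spectrum $\mathcal{C}$ collapse its higher levels to canonical copies of $C_0$. The technical task is to rigidify these level-wise equivalences into an honest morphism of $\G_m$-spectrum objects $Mot(C_0)\to\mathcal{C}$ that is compatible with the $Mot(\underline{W}_\Q)$-action rather than just a collection of isolated isomorphisms. I expect this step to require either a direct model-categorical construction on fibrant-cofibrant representatives, or, more conceptually, a Schwede--Shipley-type generation argument showing that $Mot$ is fully faithful on a set of compact generators (shifts and $\G_m$-twists of $Mot(\underline{W}_\Q)$) whose essential image generates the target, after which fully faithfulness and essential surjectivity combine to give the equivalence.

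Finally, the symmetric monoidal compatibility is built into the construction of $Mot$: the constant-spectrum functor commutes with $\otimes_{\underline{W}_\Q}$, and the relation $\epsilon_{C\otimes_{\underline{W}_\Q}D}=\epsilon_C\otimes\epsilon_D$ makes the bonding maps compatible. Combining the resulting equivalence with \eqref{eqn:Equiv1} then yields the second assertion $DM_W(k)_\Q\simeq\SH(k)^-_\Q$.
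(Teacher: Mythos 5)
Your starting point is the same as the paper's --- the identification, via Theorem~\ref{thm:Main} and the equivalence \eqref{eqn:Equiv1}, of the unit of $D_{\A^1}(k,\Q)^-$ with the constant $\G_m$-spectrum $\underline{W}_{\Q*}=Mot(\underline{W}_\Q)$ --- but the proposal has a genuine gap at exactly the two places where this identification has to be turned into structure. First, a module structure over the unit in the \emph{homotopy} category $D_{\A^1}(k,\Q)^-$ does not hand you a natural chain-level $\underline{W}_\Q$-module structure on the level-zero complex $C_0$: without a strict action there is no well-defined object of $C(Sh^{Nis}_{\underline{W}_\Q}(\Sm/k))$, so your functor $\Phi(\mathcal{C}):=C_0$ is not yet defined as a functor to $DM_W(k)_\Q$, let alone exact and monoidal. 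Second, you explicitly defer the rigidification of the levelwise equivalences $C_n\simeq C_0$ into an honest morphism of $\G_m$-spectrum objects compatible with the action (``I expect this step to require\ldots''); that rigidification is precisely the hard content of the theorem, and naming two possible strategies for it is not the same as carrying one out.

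The paper resolves both points by working with strict modules before passing to homotopy categories: it takes a cofibrant replacement $\underline{W}^{cof}_{\Q*}$ of the commutative monoid $\underline{W}_{\Q*}$ using \cite[Theorem 7.1.8]{CD}, invokes \cite[Theorem 4.3]{SS} to identify $\Hho\,\underline{W}^{cof}_{\Q*}\text{-Mod}$ with $\Hho\,\underline{W}_{\Q*}\text{-Mod}$, observes that the free/forgetful adjunction between $D_{\A^1}(k,\Q)^-$ and $\Hho\,\underline{W}^{cof}_{\Q*}\text{-Mod}$ is an equivalence because $\underline{W}^{cof}_{\Q*}$ is isomorphic to the unit (this is where Theorem~\ref{thm:Main} enters), and then compares $\underline{W}_{\Q*}\text{-Mod}$ with $DM_W(k)_\Q$ through the Quillen adjunction $\Sigma^\infty_{\G_m}\dashv\Omega^\infty_{\G_m}$. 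The key lemma there --- that for a fibrant $\underline{W}_*$-module $\sE$ each bonding map $\epsilon_{\sE_n}:\sE_n\to\sHom(\G_m,\sE_n)$ is a weak equivalence, because $\eta[t]:\underline{W}_*\to\Omega_{\G_m}\underline{W}_*$ is an isomorphism of $\underline{W}_*$-modules --- is exactly the statement your sketch asserts informally when you say the spectrum ``collapses'' to $C_0$. So your outline points in the right direction, but to close it you would essentially have to reproduce this strictification-plus-module-category argument (or complete the Schwede--Shipley-type generation argument in full); as written, the central step is assumed rather than proved.
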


The proof is accomplished by first identifying $D_{\A^1}(k,\Q)^-$ with the homotopy category of $N(\mS_k)[\eta^{-1}]_\Q$-modules in $D_{\A^1}(k,\Q)$ and then showing that this category is equivalent to $DM_W(k)_\Q$.

$N(\mS_k)_\Q$ is the unit for the symmetric monoidal struture in $D_{\A^1}(k,\Q)$, making $N(\mS_k)[\eta^{-1}]_\Q$ into a monoid object  in $D_{\A^1}(k,\Q)^-$, in fact, the unit in this symmetric monoidal category. Let $\underline{W}_{\Q*}$ denote the $\G_m$-spectrum object which is $\underline{W}_{\Q}$ in each degree, with bonding maps given by the maps $\epsilon_W$.  Theorem~\ref{thm:Main} and adjunction tells us that the unit map $N(\mS_k)[\eta^{-1}]_\Q\to \underline{W}_{\Q*}$ is a weak equivalence, hence $\underline{W}_{\Q*}$ is also a unit in $D_{\A^1}(k,\Q)^-$.

As this latter category is the homotopy category of a $\Q$-linear additive symmetric monoidal combinatorial tractable\footnote{see \cite[Definition 3.1.27]{CD}} model category satisfying the monoid axiom, we may use \cite[Theorem 7.1.8]{CD} to form a cofibrant replacement $\underline{W}^{cof}_{\Q*}\to \underline{W}_{\Q*}$  of the commutative monoid $\underline{W}_{\Q*}$ in the model category of commutative monoids in the category $C_{\A^1}(Sh_{\Q\text{-Vec}}^{Nis}(\Sm/k))^\Sigma_{\G_m}$ of  $\G_m$-symmetric spectra in $C_{\A^1}(Sh_{\Q\text{-Vec}}^{Nis}(\Sm/k))$. We have the model category $\underline{W}^{cof}_{\Q*}\text{-Mod}$ of  $\underline{W}^{cof}_{\Q*}$-modules in $C_{\A^1}(Sh_{\Q\text{-Vec}}^{Nis}(\Sm/k))^\Sigma_{\G_m}$, objects being symmetric spectra $\sE=((\sE_0, \sE_1,\ldots), \epsilon^\sE_*)$ together with a $\underline{W}^{cof}_{\Q*}$-module structure
\[
\mu_\sE:\underline{W}^{cof}_{\Q*}\wedge \sE\to \sE
\]

As $\underline{W}_{\Q*}$ is also a commutative monoid in $C_{\A^1}(Sh_{\Q\text{-Vec}}^{Nis}(\Sm/k))^\Sigma_{\G_m}$, we have 
the category $\underline{W}_{\Q*}\text{-Mod}$; the morphism $\underline{W}^{cof}_{\Q*}\to \underline{W}_{\Q*}$ of commutative monoids induces the functor
\[
\underline{W}_{\Q*}\wedge_{\underline{W}^{cof}_{\Q*}}-:\underline{W}^{cof}_{\Q*}\text{-Mod}\to \underline{W}_{\Q*}\text{-Mod}
\]
By \cite[Theorem 4.3]{SS}, this is a left Quillen equivalence of model categories,  so  $\Hho \underline{W}_{\Q*}\text{-Mod}$ is equivalent to  $\Hho\underline{W}^{cof}_{\Q*}\text{-Mod}$.

The functor
\[
\underline{W}^{cof}_{\Q*}\wedge-:C_{\A^1}(Sh_{\Q\text{-Vec}}^{Nis}(\Sm/k))^\Sigma_{\G_m}\to
\underline{W}^{cof}_{\Q*}\text{-Mod}
\]
is a left Quillen functor with forgetful functor as right adjoint. This gives us  the pair of adjoint functors
\[
\underline{W}^{cof}_{\Q*}\wedge-:D_{\A^1}(k,\Q)^-\xymatrixrowsep{2pt}\xymatrix{\ar@<3pt>[r]&\ar@<3pt>[l]} 
 \Hho \underline{W}^{cof}_{\Q*}\text{-Mod}:\text{forget}
 \]
Since  $\underline{W}^{cof}_{\Q*}$ is isomorphic to the unit $N(\mS_k)[\eta^{-1}]_\Q$ in $D_{\A^1}(k,\Q)^-$, the unit and co-unit maps of the adjunction are natural isomorphisms, so these functors are equivalences. As the free $\underline{W}^{cof}_{\Q*}$-module functor is a monoidal functor, the two categories are equivalent as tensor  triangulated categories.

To complete the proof of Theorem~\ref{thm:Equiv} it suffices to define an equivalence of $DM_W(k)$ with $\Hho \underline{W}_*\text{-Mod}$ as tensor triangulated categories. We have the 0-complex functor
\[
\Omega^\infty_{\G_m}:  \underline{W}_*\text{-Mod}\to  C_{\A^1,\G_m}(Sh^{Nis}_{\underline{W}}(\Sm/k))
\]
which sends a $\G_m$-spectrum object $\sE=(\sE_0, \sE_1,\ldots)$ to $\sE_0$; the fact that $\sE$ is a $\underline{W}_{*}$-module implies that $\sE_0$ is a complex of $\underline{W}$-modules. $\Omega^\infty_{\G_m}$ is a right Quillen functor, with left adjoint the $\G_m$-infinite suspension functor
\[
\Sigma^\infty_{\G_m}:  C_{\A^1,\G_m}(Sh^{Nis}_{\underline{W}}(\Sm/k))\to  \underline{W}_{*}\text{-Mod}
\]
Using the maps $\epsilon_M$ to define bonding maps, we have as well the constant sequence functor
\[
\text{const}: C_{\A^1,\G_m}(Sh^{Nis}_{\underline{W}}(\Sm/k))\to  \underline{W}_{*}\text{-Mod}.
\]
and the $\epsilon_M$ define as well a natural transformation $\epsilon_W:\Sigma^\infty_{\G_m}\to \text{const}$.

For $C\in C_{\A^1,\G_m}(Sh^{Nis}_{\underline{W}}(\Sm/k))$ fibrant, the transformation $\epsilon_W(C):\Sigma^\infty_{\G_m}C\to \text{const}C$ is a stable weak equivalence, which shows that the unit of the adjunction on the associated homotopy categories is an isomorphism. 

\begin{lemma} Let $\sE=(\sE_0, \sE_1,\ldots)$ be a fibrant object in $\underline{W}_{*}\text{-Mod}$. Then for each $n$ the map $\epsilon_{\sE_n}:\sE_n\to \sHom(\G_m, \sE_n)$ is a weak equivalence.
\end{lemma}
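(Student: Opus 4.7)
The plan is to identify $\epsilon_{\sE_n}$ with the adjoint bonding map $\sigma_n^\vee$ up to a level-shift weak equivalence, and conclude via fibrancy of $\sE$.

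First I would use that fibrancy of $\sE$ in the $\G_m$-symmetric-spectra model structure (which $\underline{W}_*\text{-Mod}$ inherits via the free–forget adjunction) implies that the adjoint bonding maps $\sigma_n^\vee\colon\sE_n\to \sHom(\G_m,\sE_{n+1})$ are quasi-isomorphisms for all $n$. Next I would unravel the $\underline{W}_*$-module structure: the bonding of $\underline{W}_*$ is adjoint to the isomorphism $\epsilon_W$, so the $(1,n)$-component $\mu_{1,n}\colon \underline{W}\wedge \sE_n\to \sE_{n+1}$ of the action map $\underline{W}_*\wedge\sE\to\sE$ satisfies
\[
\sigma_n^{\sE}=\mu_{1,n}\circ(\epsilon_W(1)\wedge \id).
\]
Module associativity makes $\mu_{1,n}$ $\underline{W}$-linear with respect to the level-$n$ action on the source and the level-$(n+1)$ action on the target, so $\mu_{1,n}(w\otimes x)=w\cdot f_n(x)$, where $f_n\colon \sE_n\to \sE_{n+1}$ is defined by $f_n(x):=\mu_{1,n}(1\otimes x)$. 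Taking adjoints then yields the key identity
\[
\sigma_n^\vee=\epsilon_{\sE_{n+1}}\circ f_n.
\]

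The crucial remaining step is to show that $f_n$ is a weak equivalence. The key input is that the bonding maps of $\underline{W}_*$ itself are isomorphisms (since $\epsilon_W$ is an iso). On a free $\underline{W}_*$-module $\underline{W}_*\wedge C$ the map $f_n$ is constructed directly out of the iso $\epsilon_W^\sharp$, and hence is itself an isomorphism. By presenting a cofibrant replacement of $\sE$ as an iterated homotopy colimit of such free $\underline{W}_*$-modules and appealing to the stability of weak equivalences under homotopy colimits, one upgrades this to the weak-equivalence statement for $\sE$ itself (using that the cofibrant replacement is weakly equivalent to the given fibrant $\sE$).

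Finally, combining the identity $\sigma_n^\vee=\epsilon_{\sE_{n+1}}\circ f_n$ with the facts that $\sigma_n^\vee$ and $f_n$ are both weak equivalences, I conclude that $\epsilon_{\sE_{n+1}}$ is a weak equivalence for every $n\geq -1$, which proves the lemma. The hardest step will be verifying that $f_n$ is a weak equivalence: the naive factorization does not by itself force this, and one has to leverage the invertibility of $\epsilon_W$ together with a cofibrant/cellular presentation of $\sE$ as a $\underline{W}_*$-module to propagate the statement from the free case to the general fibrant case.
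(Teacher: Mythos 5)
Your overall strategy shares the key inputs with the paper (the $\underline{W}_*$-module structure, the invertibility of the bonding iso $\epsilon_W$, and fibrancy of $\sE$), and the factorization identity $\sigma_n^\vee=\epsilon_{\sE_{n+1}}\circ f_n$ is correct. But the route you take is genuinely different from the paper's, and the part you yourself flag as hardest — that $f_n$ is a weak equivalence of level-$n$ objects — is where a real gap sits. The cellular-induction argument you sketch runs at the wrong ``height'': the free $\underline{W}_*$-module $\underline{W}_*\wedge C$ has levels given by a colimit mixing all lower levels of $C$ and of $\underline{W}_*$, so it is not transparent that $f_n$ is a level-wise equivalence there; level-wise equivalences are not closed under the homotopy colimits used to build a cellular presentation; and a cofibrant replacement $\sE^{cof}\to\sE$ is only a \emph{stable} equivalence, not a level-wise one, so the conclusion does not transfer to the given fibrant $\sE$ without a further $\Omega$-spectrum argument. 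Finally there is an off-by-one: your factorization only reaches $\epsilon_{\sE_m}$ for $m\ge 1$, so $\epsilon_{\sE_0}$ needs a separate step (the ``$n\ge -1$'' at the end is a slip, since $\sigma_{-1}^\vee$ does not exist).

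The paper sidesteps all of this by never trying to prove a level-wise statement directly. It works entirely at the spectrum level in $\Hho\underline{W}_*\text{-Mod}$: the iso $\eta[t]\colon\underline{W}_*\to\Omega_{\G_m}\underline{W}_*$ (your invertible $\epsilon_W$ packaged as a module map) is tensored over $\underline{W}_*$ with $\sE$, giving the commutative square
\[
\xymatrix{\underline{W}_*\otimes_{\underline{W}_*}\sE\ar[r]^-{m}\ar[d]_{\eta[t]\otimes 1}&\sE\ar[d]^{\epsilon_{\sE_*}}\\ \Omega_{\G_m}\underline{W}_*\otimes_{\underline{W}_*}\sE\ar[r]_-{m}&\Omega_{\G_m}\sE}
\]
in which the other three arrows are isomorphisms, so $\epsilon_{\sE_*}$ is a stable equivalence; then, because $\sE$ is fibrant (hence an $\Omega_{\G_m}$-spectrum, as is $\Omega_{\G_m}\sE$), a stable equivalence between them is automatically a level-wise equivalence, giving all levels including $n=0$ at once. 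If you want to salvage your version, the right fix is exactly this: prove that $f\colon\sE\to\operatorname{sh}\sE$ is a \emph{stable} weak equivalence (by tensoring the iso $\underline{W}_*\to\operatorname{sh}\underline{W}_*$ over $\underline{W}_*$ with $\sE$ — no cellular induction needed), and only then invoke fibrancy to upgrade to level-wise; at that point you have essentially reproduced the paper's argument with an extra intermediate step.
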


\begin{proof} The map $\eta[t]:W\to \Omega_{\G_m}W$ is an isomorphism. Using this map in each level defines an isomorphism of $ \underline{W}_*$ modules
\[
\eta[t]: \underline{W}_*\to \Omega_{\G_m} \underline{W}_*
\]
Let $\epsilon_{\sE_*}:\sE\to \Omega_{\G_m}\sE$ be the map defined by the collection of morphisms $\epsilon_{\sE_n}:\sE_n\to \Omega_{\G_m}\sE_n$. We have the commutative diagram iin $\Ho  \underline{W}_*\text{-Mod}$
\[
\xymatrix{
\underline{W}_*\otimes_{\underline{W}_*}\sE\ar[r]^-m\ar[d]_{\eta[t]}&\sE\ar[d]^{\epsilon_{\sE_*}}\\
\Omega_{\G_m}\underline{W}_*\otimes_{\underline{W}_*}\sE\ar[r]_-m&\Omega_{\G_m}\sE}
\]
As the two horizontal arrows and the left-hand vertical arrow are isomorphisms, so is $\epsilon_{\sE_*}$. As $\sE$ is fibrant, $\sE$ is an $\Omega_{\G_m}$-spectrum, so the individual maps $\epsilon_{\sE_n}:\sE_n\to \Omega_{\G_m}\sE_n$ are all isomorphisms in $\SH(k)$.
\end{proof}

Thus, for $\sE\in  \underline{W}_{*}\text{-Mod}$ fibrant, $\sE=((\sE_0,\sE_1,\ldots), \epsilon_n)$, we have the weak equivalence
\[
\phi_n:\sE_n\to \sE_0
\]
defined as the composition
\[
\sE_n\xrightarrow{\epsilon_W^n}\Omega^n_{\G_m}\sE_n\xrightarrow{\epsilon_*}\colim_p\Omega^p_{\G_m}\sE_p\cong \sE_0
\]
The $\phi_n$ define a  weak equivalence of spectra
\[
\phi:\sE\to \text{const}\sE_0
\]
giving us the commutative  diagram
\[
\xymatrix{ 
\Sigma^\infty_{\G_m}\sE_0\ar[r]^{can}\ar[d]_{\epsilon_W}&\sE\ar[dl]^\phi\\\text{const}\sE_0}.
\]
This shows that both $can$ and $\epsilon_W$ are isomorphisms in the homotopy category, and thus $\Sigma^\infty_{\G_m}$ and $\Omega^\infty_{\G_m}$ are inverse Quillen equivalences, giving us the desired equivalence on the homotopy categories.
Passing to the $\Q$-localization gives us the desired sequence of equivalences
\[
D_{\A^1}(k,\Q)^-\sim  \Hho \underline{W}^{cof}_{\Q*}\text{-Mod}\sim  \Hho \underline{W}_{\Q*}\text{-Mod}\sim DM_W(k)_\Q
\]

In the course of the proof of Theorem~\ref{thm:Equiv} we have also proved
\begin{corollary}\label{cor:Equiv}  The categories $DM_W(k)$ and  $\Hho  \underline{W}_{*}\text{-Mod}$ are equivalent as triangulated tensor categories. $\Hho  \underline{W}_{\Q*}\text{-Mod}$ and $\SH(k)^-_\Q$ are  equivalent as triangulated tensor categories. 
\end{corollary}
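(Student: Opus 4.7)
The plan is to realize both $DM_W(k)_\Q$ and $D_{\A^1}(k,\Q)^-$ as the homotopy category of modules over the commutative monoid $\underline{W}_{\Q*}$ in $\G_m$-symmetric spectra of $\Q$-vector space Nisnevich sheaves, using Theorem~\ref{thm:Main} as the key bridge. Indeed, that theorem identifies the unit of $D_{\A^1}(k,\Q)^-$, namely $N(\mS_k)[\eta^{-1}]_\Q$, with $\underline{W}_{\Q*}$ (the constant $\G_m$-spectrum on $\underline{W}_\Q$ with bonding maps $\epsilon_W$), which is precisely the kind of rigidification needed to replace a homotopy category by a module category.

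For the first identification $D_{\A^1}(k,\Q)^-\simeq \Hho \underline{W}_{\Q*}\text{-Mod}$, I would invoke \cite[Theorem 7.1.8]{CD}: the ambient model category is $\Q$-linear, combinatorial, tractable, proper and satisfies the monoid axiom, so it admits a cofibrant replacement $\underline{W}^{cof}_{\Q*}\to \underline{W}_{\Q*}$ in commutative monoids. Theorem~\ref{thm:Main} makes $\underline{W}^{cof}_{\Q*}$ a cofibrant model for the unit of $D_{\A^1}(k,\Q)^-$, so the Quillen pair between the free module functor $\underline{W}^{cof}_{\Q*}\wedge -$ and the forgetful functor has unit and counit that are isomorphisms on homotopy categories, yielding $D_{\A^1}(k,\Q)^-\simeq \Hho \underline{W}^{cof}_{\Q*}\text{-Mod}$ as tensor triangulated categories. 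The change of rings result \cite[Theorem 4.3]{SS} along $\underline{W}^{cof}_{\Q*}\to \underline{W}_{\Q*}$ then gives $\Hho \underline{W}^{cof}_{\Q*}\text{-Mod}\simeq \Hho \underline{W}_{\Q*}\text{-Mod}$.

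For the second identification $\Hho \underline{W}_{\Q*}\text{-Mod}\simeq DM_W(k)_\Q$, I would analyze the Quillen adjunction between the $\G_m$-infinite suspension functor $\Sigma^\infty_{\G_m}$ and the zero-complex functor $\Omega^\infty_{\G_m}$ connecting $C_{\A^1,\G_m}(Sh^{Nis}_{\underline{W}_\Q}(\Sm/k))$ and $\underline{W}_{\Q*}\text{-Mod}$, together with the constant-sequence functor that makes the $\epsilon_W$ bonding maps visible. The unit $C\to \Omega^\infty_{\G_m}\Sigma^\infty_{\G_m}C$ is an equivalence for fibrant $C$ because $\epsilon_W(C):\Sigma^\infty_{\G_m}C\to \text{const}\,C$ is a stable weak equivalence. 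For the counit, given fibrant $\sE=(\sE_0,\sE_1,\ldots)$ in $\underline{W}_{\Q*}\text{-Mod}$, I would produce a levelwise weak equivalence $\phi:\sE\to \text{const}\,\sE_0$ by composing the module-action maps $\epsilon_W^n:\sE_n\to \Omega^n_{\G_m}\sE_n$ with the structural identifications $\colim_p\Omega^p_{\G_m}\sE_p\cong \sE_0$.

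The main obstacle is verifying that for a fibrant $\sE\in \underline{W}_{\Q*}\text{-Mod}$ the module-theoretic action $\epsilon_{\sE_n}:\sE_n\to \sHom(\G_m,\sE_n)$ is a weak equivalence in each level. My approach is to observe that $\eta[t]$ defines an isomorphism of $\underline{W}_{\Q*}$-modules $\underline{W}_{\Q*}\to \Omega_{\G_m}\underline{W}_{\Q*}$, so tensoring with $\sE$ over $\underline{W}_{\Q*}$ in the homotopy category forces $\epsilon_{\sE_*}:\sE\to \Omega_{\G_m}\sE$ to be an isomorphism; since $\sE$ is fibrant it is already an $\Omega_{\G_m}$-spectrum, which promotes this global statement to the individual $\epsilon_{\sE_n}$. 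Chaining the two identifications together and composing with the equivalence~\eqref{eqn:Equiv1} yields the full equivalence $DM_W(k)_\Q\simeq \SH(k)^-_\Q$.
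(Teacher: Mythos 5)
Your proposal follows the paper's proof of Theorem~\ref{thm:Equiv} essentially step for step: rigidify the unit of $D_{\A^1}(k,\Q)^-$ via Theorem~\ref{thm:Main}, invoke \cite[Theorem 7.1.8]{CD} and \cite[Theorem 4.3]{SS} to identify $D_{\A^1}(k,\Q)^-$ with $\Hho\,\underline{W}_{\Q*}\text{-Mod}$, and close the circle with the $\Sigma^\infty_{\G_m}/\Omega^\infty_{\G_m}$ Quillen adjunction together with the $\eta[t]$-isomorphism lemma. The one small omission is that the first assertion of the corollary is integral, $DM_W(k)\simeq\Hho\,\underline{W}_*\text{-Mod}$, whereas you run the $\Sigma^\infty_{\G_m}/\Omega^\infty_{\G_m}$ argument only with $\Q$-coefficients --- but since that step (the stable weak equivalence $\epsilon_W(C)$, the levelwise map $\phi$, and the $\eta[t]$-isomorphism on $\underline{W}$) uses no rationalization, it applies verbatim over $\Z$, which is exactly how the paper proceeds before passing to $\Q$.
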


\begin{remark} One can rather easily compute the ``Witt motivic cohomology'' of some $X\in\Sm/k$. We define $M_W(X)=L_{\A^1,\G_m}\underline{W}(X)$. As  $\underline{W}$ is strictly $\A^1$-invariant and $\G_m$-stable, $\underline{W}$ is already an object of $DM_W(k)$. For $C$ in $DM_W(k)$, corresponding to $C_*$ in $\Hho \underline{W}\text{-Mod}$ via the equivalence in Corollary~\ref{cor:Equiv}, we have the object $C(q)$, $q\in\Z$,  corresponding to $\Sigma^q_{\G_m}C_*[-q]$.

Then
\begin{align*}
H^p(X,W(q))&:=\Hom_{DM_W(k)}(M_W(X), \underline{W}(q)[p])\\&\cong  \Hom_{DM_W(k)}(M_W(X), \underline{W}[p-q])\\
&\cong \Hom_{D(Sh^{Nis}_{\underline{W}}(\Sm/k))}(\underline{W}(X), \underline{W}[p-q])\\
&\cong \Hom_{D(Sh^{Nis}_{Ab}(\Sm/k))}(\Z(X), \underline{W}[p-q])\\
&\cong H^{p-q}_{Nis}(X, \underline{W})\\
&\cong H^{p-q}_{Zar}(X, \underline{W}).
\end{align*}
\end{remark}

\end{document}